\documentclass[12pt]{article}
\usepackage{mathrsfs}
\usepackage{amsthm}
\usepackage{amssymb}
\usepackage{latexsym}
\usepackage{amsmath,amsfonts}
\usepackage{mathrsfs}
\usepackage{cases}
\usepackage{latexsym,bm}
\usepackage{indentfirst}
\usepackage{xcolor}
\usepackage[normalem]{ulem}
\usepackage{ifpdf}
\usepackage{graphicx}
\usepackage{epstopdf}
\usepackage{epsfig}
\usepackage{psfrag}
\usepackage{enumitem}
\usepackage{epstopdf}
\usepackage{verbatim}
\usepackage{color}

\usepackage[
pdfauthor={Lu},
pdftitle={Bricks that every removable edge is solitary},
pdfstartview=XYZ,
bookmarks=true,
colorlinks=true,
linkcolor=blue,
urlcolor=blue,
citecolor=blue,
bookmarks=true,
linktocpage=true,
hyperindex=true
]{hyperref}

\title{ Bricks that every removable edge is solitary
\footnote{ Supported by the National Natural Science Foundation of China (Grant Nos. 12371355 and 12271235), Natural Science Foundation of Fujian Province (No. 2026J002034) and Institute of Meteorological Big Data-Digital Fujian.
 
 \footnotesize{\(^{\dagger}\)Corresponding author. }
 }
}

\author{
	\parbox{0.9\textwidth}{%
		\centering
		Jinxin Xue\(^{1}\), Jun Ge\(^{1}\), Fuliang Lu\(^{2,\dagger}\), Yaxian Zhang\(^{1}\) \\[1ex]
		\small
		\(^{1}\)School of Mathematical Sciences, Sichuan Normal University, Chengdu, Sichuan 610068, P.R. China \\
		\small
		\(^{2}\)School of Mathematics and Statistics \& Fujian Key Laboratory of Granular Computing and Applications, Minnan Normal University, Zhangzhou, Fujian 363000, P.R. China
	}%
}

\date{}

\newtheorem{lem}{Lemma}[section]
\newtheorem{thm}[lem]{Theorem}
\newtheorem{cor}[lem]{Corollary}

\newtheorem{pro}[lem]{Proposition}

\newtheorem{defi}[lem]{Definition}

\newtheorem{note}[lem]{Note}

\newtheorem{prob}[lem]{Problem}

\definecolor{revblue}{RGB}{0,82,155}

\def\typeone{type~\uppercase\expandafter{\romannumeral 1}}
\def\typetwo{type~\uppercase\expandafter{\romannumeral 2}}

\begin{document}
\newcommand{\udots}{\mathinner{\mskip1mu\raise1pt\vbox{\kern7pt\hbox{.}}
\mskip2mu\raise4pt\hbox{.}\mskip2mu\raise7pt\hbox{.}\mskip1mu}}
\maketitle
\begin{abstract}

A brick is a 3-connected graph $G$ such that $G-u-v$ has a perfect matching for any two distinct vertices $u,v\in V(G)$.
An edge $e$ in a matching covered graph $G$ is removable if $G-e$ is matching covered.
We say that a removable edge $e$ in a brick $G$ is $b$-invariant if $b(G-e)=b(G)=1$, where $b(H)$ denotes the number of bricks in the tight cut decomposition of a matching covered graph $H$.
An edge of a graph is solitary if it lies in precisely one perfect matching.

Lucchesi and Murty proposed the problem of characterizing bricks, distinct from
$K_4$, $\overline{C_6}$ and the Petersen graph, in which every $b$-invariant edge is solitary. Note that every $b$-invariant edge is removable. 
In this paper, we strengthen the condition by requiring that every removable edge is solitary. We show that every nonsolid brick satisfying this strengthened condition can be obtained by repeatedly splicing odd wheels (up to multiple edges). Moreover, properties of such bricks imply that ``repeatedly splicing odd wheels'' cannot be replaced by ``repeatedly splicing copies of $K_4$''.
%
\end{abstract}

{\bf Keywords:} \  Brick; Perfect matching; Removable edge; Solitary edge 
	
\section{Introduction}

Graphs considered in this paper are finite and loopless (multiple edges are allowed, where an edge $e$ of a graph $G$ is a
{\em multiple edge} if there are at least two edges of $G$ with the same ends as $e$). For notation and terminology not defined here, we follow \cite{BM08,LP86}. 
Let $G$ be a graph with the vertex set $V(G)$ and the edge set $E(G)$. 
For $\emptyset\neq X\subset V(G)$, let $\overline{X}=V(G)\setminus X$. The \emph{edge cut} of $G$ determined by $X$ is the set of all edges with one end in $X$ and the other in $\overline{X}$, denoted by $\partial_G(X)$. If $X=\{u\}$, we write $\partial_G(u)$ for $\partial_G(\{u\})$. We shall omit the subscript $G$ when the graph is understood. An edge cut $\partial(X)$ is called \emph{trivial} if $|X|=1$ or $|\overline{X}|=1$.

Let $G$ be a graph with a perfect matching. A connected graph is called \textit{matching covered} if it has at least one edge and each edge is contained in some perfect matching.
We denote by $G/(X\rightarrow x)$ the graph obtained from $G$
by contracting $X$ to a single vertex $x$, for brevity, by $G/X$.
The graphs
$G/X$ and $G/\overline{X}$ are the two \emph{$\partial(X)$-contractions} of $G$.
An edge cut  $\partial(X)$ of a matching covered graph $G$ is \emph{separating}
if both $\partial(X)$-contractions of $G$ are matching covered, and is \textit{tight} if  $|\partial(X)\cap M|=1$ for every perfect matching $M$ of $G$.
A matching covered graph $G$ that contains no nontrivial tight
cuts is called a \textit{brick} if $G$ is nonbipartite; and a \textit{brace} otherwise.
A brick $G$ is \textit{solid} if every separating cut of $G$ is also tight.
Edmonds, Lov\'asz and Pulleyblank \cite{ELP82} showed that a graph $G$ is a brick if and only if $G$ is 3-connected and bicritical (i.e. $G-u-v$ has a perfect matching for any two distinct vertices $u$, $v\in V(G)$). 
Thus, every brick has at least four vertices and its minimum degree is at least three.


Let $G$ be a matching covered graph. 
Lov\'asz~\cite{Lovasz87} proved
that $G$ can be decomposed
into a unique list of bricks and braces (up to multiple edges) by a
procedure called the tight cut decomposition. Let $b(G)$ denote the number of bricks in such list. 
An edge $e\in E(G)$ is called \textit{removable} if $G-e$ is matching covered. 
A \emph{removable doubleton} of $G$ means a pair of edges $\{e,f\}$ such that $G-e-f$ is matching covered, but neither $G-e$ nor $G-f$ is. 
Lov\'asz~\cite{L1983} proved that every brick, distinct from $K_4$ (a complete simple graph with four vertices) and $\overline{C_6}$ (the triangular prism) has a removable edge. 
Improving Lov\'asz's result, Carvalho et al. \cite{CLM99} showed that each brick $G$ other than $K_4$ and $\overline{C_6}$ has at least $\Delta-2$ removable edges, where $\Delta$ is the maximum degree of $G$. 
Carvalho et al. \cite{CLM12} proved that every solid brick $G$ distinct from $K_4$ has at least $|V(G)|/2$ removable edges.
Kothari et al. \cite{KCLL20} showed that in an essentially 4-edge-connected cubic brick, each edge is either
removable or participates in a removable doubleton.
Wu et al. \cite{LF23} proved that every claw-free brick $G$ with at least 8 vertices has at least
$5|V(G)|/8$ removable edges.

A removable edge of a matching covered graph $G$ is \textit{$b$-invariant} if $b(G-e)=b(G)$. In particular, if $e$ is a $b$-invariant edge of a brick $G$, then $b(G-e)=1$.
A \emph{near-brick} $G$ is a matching covered graph with $b(G)=1$. Obviously, a brick is a near-brick.
An edge of a graph is \emph{solitary} (or \textit{forcing}) if it lies in precisely one perfect matching. 
Lucchesi and Murty \cite{Carvalho2004} used $b$-invariant edges to characterize extremal matching covered graphs and proposed solitary edges.
 Recently, Lucchesi and Murty proposed the following problem, see Unsolved Problems 1 in \cite{Lucchesi2024}.

\begin{prob}{\rm\cite{Lucchesi2024}}\label{prob}
	Characterize bricks, distinct from $K_4$, $\overline{C_6}$ and the Petersen graph, in which every $b$-invariant edge is solitary.
\end{prob}

For Problem \ref{prob}, Zhang et al. \cite{Zhang24} found that there are only seven cubic bricks, distinct from $K_4$, $\overline{C_6}$ and the Petersen graph, in which every $b$-invariant edge is solitary. Subsequently, Zhang and Wang \cite{ZhangW25-1} characterized several claw-free bricks, distinct from $K_4$ and $\overline{C_6}$, satisfying the same property. 
It can be checked that the above cubic bricks and  claw-free bricks also satisfy that every removable edge is solitary.
Zhou et al. \cite{F2025} further gave a complete characterization of claw-free solid bricks.

For an integer $k\geq3$, the \emph{wheel} $W_k$ is the graph obtained from a cycle $C$ of length $k$ by adding a new vertex $h$ and joining it to all vertices of $C$.
The cycle $C$ is called the \emph{rim} of $W_k$, the vertex $h$ is called its \emph{hub}, and the edges incident with $h$ are called the \emph{spokes} of $W_k$. A wheel $W_k$ is odd if $k$ is odd. The graph $K_4$ is an odd wheel $W_3$ in which every edge lies in a removable doubleton. For an odd wheel other than $K_4$,
it can be checked that no edge on the rim is removable, and  every spoke is removable  (for example, see Exercise 2.2.4 in \cite{Lucchesi2024}).
Zhang et al. \cite{ZhangW25} proved that a simple solid brick, distinct from $K_4$, that every $b$-invariant edge is solitary is an odd wheel $W_n$, where $n\ge 5$.
Note that if every removable edge of a brick is solitary, then every $b$-invariant edge is solitary; hence we focus on bricks in which every removable edge is solitary.


In this paper, we obtain the main result as follows.
\begin{thm}\label{main}
	Every simple nonsolid brick, in which every removable edge is solitary, is a splicing of an odd wheel {\rm(}up to multiple edges{\rm)} and a brick in which every removable edge is solitary.
\end{thm}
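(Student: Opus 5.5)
Since $G$ is a nonsolid brick, it has a nontrivial separating cut $C=\partial(X)$ that is not tight. Among all such cuts I will choose one with $|X|$ minimal. The basic fact I will use is that for a nontrivial separating non-tight cut in a brick, both $C$-contractions $G_1=G/\overline{X}$ and $G_2=G/X$ are matching covered and are near-bricks. Minimality will let me arrange that $G_1$ is a brick and in fact solid. Then $G$ is the splicing of $G_1$ and $G_2$ at the contraction vertices. The plan has two halves: show that $G_1$ is an odd wheel up to multiple edges, and show that $G_2$ is a brick in which every removable edge is solitary.

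\textbf{Transferring solitariness.} The key link is that every perfect matching of $G_i$ extends to a perfect matching of $G$. Indeed, a perfect matching $M_1$ of $G_1$ uses exactly one edge $e$ of $C$, and $e$ extends to a perfect matching of $G_2$ using $e$, since $G_2$ is matching covered. Consequently, if an edge $f$ of $G_1$ not in $C$ lies in two distinct perfect matchings of $G_1$, then it lies in two distinct perfect matchings of $G$, so $f$ is not solitary in $G$.

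I also need removable edges of $G_1$ (off $C$) to lift. If $f\in E(G_1)\setminus C$ is removable in $G_1$, then $G-f$ should be matching covered. I would check this edge by edge. Edges of $G[X]$ are covered through matchings of $G_1-f$ glued to matchings of $G_2$. Edges of $G_2$ are covered because every edge of $C$ is still admissible in $G_1-f$. Hence every removable edge of $G_1$ outside $C$ is solitary in $G_1$. The edges of $C$ need a separate argument, which I would base on the fact that $C$ is not tight in $G$.

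\textbf{$G_1$ is an odd wheel.} Now $G_1$ is a solid brick, and each of its removable edges is solitary, with the $C$-edges handled separately. I would apply the result of Zhang et al.~\cite{ZhangW25} to the underlying simple graph: a simple solid brick other than $K_4$ in which every $b$-invariant edge is solitary is an odd wheel $W_n$ with $n\ge5$. Multiple edges can only arise from $C$ incident with the contraction vertex. The case where the underlying graph is $K_4$ is also allowed, since $K_4=W_3$.

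Two points need care here.
\begin{itemize}
\item In a solid brick, I must confirm that removable edges are $b$-invariant, or at least that the hypothesis of \cite{ZhangW25} is met.
\item I must handle the edges of $C$: multiple edges at the contracted vertex are never removable in a way that breaks the argument, and the simplification preserves brickness.
\end{itemize}

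\textbf{$G_2$ is a brick with every removable edge solitary.} By the same lifting argument applied to $G_2$, every removable edge of $G_2$ outside $C$ is solitary. What remains is to show that $G_2$ is a brick rather than merely a near-brick with tight cuts. If $G_2$ had a nontrivial tight cut $D$, then $D$ would induce a separating cut in $G$. I would try to use the wheel structure of $G_1$ to derive a contradiction with the hypothesis, either by exhibiting a removable non-solitary spoke or by contradicting minimality of $X$. If that fails, I would instead re-choose $C$ so that the second contraction is also a brick, via a standard uncrossing argument on separating cuts.

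\textbf{Main obstacle.} I expect the hardest step to be the analysis of the cut edges together with brickness of $G_2$. Whether the spokes incident with the contraction vertex remain removable in $G$, and whether $G_2$ avoids tight cuts, requires a genuine structural argument. For this I would use the solitary property: in $G$, a spoke $hv$ of the wheel that is removable must be solitary, and this forces strong restrictions on how matchings of $G_2$ interact with $C$.
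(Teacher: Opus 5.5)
There is a genuine gap: you never show that the second contraction $G_2=G/X$ is a brick, and that is where the real content of the theorem lies. Your starting ``basic fact'' is false. A nontrivial separating cut of a brick need not be robust; that every nonsolid brick has \emph{some} robust cut is a deep theorem of Carvalho, Lucchesi and Murty.

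Minimality of $|X|$ does rescue one side. Any nontrivial separating cut $\partial(Y)$ of $G/\overline{X}$ with $Y\subseteq X$ is a nontrivial separating cut of $G$, and bicriticality of $G$ rules out $G/\overline{X}$ being bipartite, so $G/\overline{X}$ is a solid brick. But minimality gives no control over $G/X$, which may have nontrivial tight cuts.

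The paper does not pick a minimal shore. It invokes Lemma~\ref{lem:Robust-solid}, which gives a three-piece structure: a solid brick $G_1=G/\overline{X'}$, a bipartite matching covered graph $H$ with $x',x''$ in opposite color classes, and a brick $G_2=G/\overline{X''}$. The decisive step (Claim~1) is that $H$ is trivial. If $|V(H)|\ge 4$, take an edge $e\in\partial_H(x')\setminus E_H[x',x'']$. It is removable in $H$ (Lemma~\ref{lem:2-bi-removable}), hence removable in $G/X'$ (Lemma~\ref{lem:re_also_re}). It is not solitary in $H$ (Lemma~\ref{lem:H-not-solitary}), hence not solitary in $G/X'$ (Proposition~\ref{pro:solitary}). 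This contradicts the fact that every removable edge of $G/X'$ is solitary. None of your fallback options (a removable non-solitary spoke, contradicting minimality, uncrossing) contains this idea. Note that the offending edge lies in the bipartite middle piece and need not be a spoke.

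Two further steps are also unsupported as written.

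First, your lifting argument only shows that removable edges of $G_2$ \emph{outside} $C$ are solitary. An edge $e\in C$ that is removable in $G_2$ but not in $G_1$ needs the wheel structure of $G_1$. You find a removable edge $e'$ of $G_1$ off $C$ lying in a perfect matching of $G_1$ with $e$: a spoke if $k\ge 5$, or via Lemma~\ref{thm:removable doubleton} if $k=3$. Then $e'$ is solitary in $G$, which forces $e$ to be solitary in $G_2$ by Corollary~\ref{lem:MC}(2). This is Lemma~\ref{lem:G2re-is-so}, and it is also what drives Claim~1.

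Second, \cite{ZhangW25} cannot be applied to $G_1$ directly. You only know that removable edges of $G_1$ not incident with the contraction vertex are solitary. Removable edges at that vertex are uncontrolled, so the hypothesis that every $b$-invariant edge is solitary is not available. You need a characterization that tolerates one exceptional vertex. That is Corollary~\ref{lem:Solid-con} (from Lemmas~\ref{lem:S-non} and~\ref{lem:S-wheel}), together with Note~\ref{note:multiple} for the multiplicities.
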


  Furthermore, some properties of  bricks in which every removable edge is solitary are presented.
 In the final section, we construct an infinite graph family $\{G_i\mid i\ge 0\}$ such that all removable edges are solitary, and verify that none of these graphs can be seen as a splicing of a brick 
 and a $K_4$ up to multiple edges. This also implies that the odd wheel involved in Theorem \ref{main} cannot be replaced by $K_4$.
\section{ Preliminaries }

 We begin with some notation. 
 For a subset $X\subseteq V(G)$, let $G[X]$ denote the subgraph of $G$ induced by $X$. For two disjoint nonempty subsets $X,Y\subseteq V(G)$, let $E_G[X,Y]$ denote the set of edges with one end in $X$ and the other in $Y$. In particular, if $X=\{x\}$ and $Y=\{y\}$, we simply write $E_G[x,y]$ for $E_G[X,Y]$. We shall omit the subscript $G$ when the graph is understood. 
 
For a vertex $u\in V(G)$, let $N_G(u)$ denote the set of neighbors of $u$ in $G$. The \emph{degree} of $u$ in $G$, denoted by $d_G(u)$, is the number of edges incident with $u$. When the graph is understood, we write $N(u)$ and $d(u)$ instead.

 Let $G$ be a graph with a perfect matching.
 A nonempty vertex set $B$ of $G$ is a {\em barrier} of $G$ if $o(G-B)=|B|$, where $o(G-B)$ denotes the number of odd components of $G-B$.
 Tutte proved the following fundamental theorem in 1947.

 \begin{thm}[Tutte, see \cite{Tutte47}]\label{thm:Tutte}
 	A graph $G$ has a perfect matching if and only if $o(G-X)\le|X|$, for every $X\subseteq V(G)$.
 \end{thm}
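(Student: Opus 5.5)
Tutte's theorem is classical. I would follow Lovász's proof by an edge-maximal counterexample.

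\emph{Necessity.} If $M$ is a perfect matching and $X\subseteq V(G)$, then every odd component of $G-X$ has a vertex matched by $M$ to a vertex of $X$. These vertices of $X$ are distinct, so $o(G-X)\le |X|$.

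\emph{Sufficiency.} Suppose the theorem fails. Taking $X=\emptyset$ shows that $|V(G)|$ is even.
\begin{itemize}
\item Adding an edge cannot increase $o(G-X)$ for any $X$. Indeed, merging two odd components gives an even one, and merging an odd with an even component gives an odd one.
\item Hence we may add edges to a counterexample until it is edge-maximal. The result is a graph $G$ with the Tutte condition and no perfect matching, such that $G+e$ has a perfect matching for every non-edge $e$.
\end{itemize}
Let $U$ be the set of vertices of $G$ adjacent to all other vertices.

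\emph{Case 1: every component of $G-U$ is complete.}
\begin{itemize}
\item Match the vertices of each even component among themselves.
\item Match all but one vertex of each odd component among themselves, and match the remaining vertex to a distinct vertex of $U$. This is possible because $o(G-U)\le|U|$.
\item The unmatched vertices of $U$ are pairwise adjacent. Their number is $|V(G)|$ minus an even number, hence even, so they can be paired up.
\end{itemize}
This gives a perfect matching, a contradiction.

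\emph{Case 2: some component of $G-U$ is not complete.} Then that component contains nonadjacent vertices $a,c$ with a common neighbour $b$. Since $b\notin U$, there is a vertex $d$ not adjacent to $b$. Let $M_1$ be a perfect matching of $G+ac$ and $M_2$ a perfect matching of $G+bd$. Since $G$ has no perfect matching, $ac\in M_1$ and $bd\in M_2$.

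Consider $M_1\triangle M_2$, a disjoint union of even alternating cycles, and let $C$ be the cycle containing $bd$.
\begin{itemize}
\item If $ac\notin C$, then $M_2\triangle C$ is a perfect matching of $G$, a contradiction.
\item If $ac\in C$, traverse $C$ from $d$ away from $b$ until the first of $a,c$ is reached; say it is $a$. Let $P$ be the segment from $d$ to $a$.
\item Then $P$, the edge $ab$, and the $M_1$-structure away from $C$ combine to give a perfect matching of $G$. Concretely, take the $M_1$-edges on $P$, the edge $ab$, the $M_2$-edges on the rest of $C$ avoiding $bd$, and $M_1$ outside $C$.
\end{itemize}
This is again a contradiction.

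The main obstacle is Case 2. One must check that the constructed edge set uses neither $ac$ nor $bd$ and covers every vertex exactly once. This needs careful bookkeeping of the parity of positions along $C$ and of which matching each edge of $P$ belongs to.
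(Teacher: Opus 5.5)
The paper gives no proof of this statement; it quotes Tutte \cite{Tutte47} as classical background, so there is no internal argument to compare against. Your proposal is the standard edge-maximal-counterexample proof that you attribute to Lov\'asz, and it is correct. Compared with the paper's bare citation, it only adds self-containedness.

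The bookkeeping you defer in Case~2 goes through as follows.
\begin{enumerate}
\item The $M_1$-edge at $a$ is $ac$, and $c$ has not yet been visited, so $P$ enters $a$ along its $M_2$-edge. Since $P$ leaves $d$ along its $M_1$-edge, $P$ has even length, and its $M_1$-edges cover exactly $V(P)\setminus\{a\}$.
\item $b\notin V(P)$. Walking around $C$ from $d$ away from $b$ reaches $b$ only after every other vertex of $C$, and $a,c\notin\{b,d\}$.
\item The rest of $C-bd$ is an $a$--$b$ path $Q$. It begins with $ac\in M_1$ and ends with the $M_1$-edge at $b$, so its $M_2$-edges cover exactly $V(Q)\setminus\{a,b\}$. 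The edge $ab\in E(G)$ covers $a$ and $b$.
\item No chosen edge is $ac$ or $bd$. The $M_1$-edges off $C$ lie in $G$ because $ac\in C$.
\end{enumerate}
Equivalently, on $V(C)$ your matching coincides with $M_2\triangle(P+ab+bd)$.
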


Using Tutte's Theorem, one can deduce the following conclusion for matching covered graphs:



 \begin{pro}[Theorem 4.2 in \cite{Lucchesi2024}]\label{pro:separating}
 	An edge cut $C$ of a matching covered graph $G$ is separating if and only if each edge of $G$ lies in a perfect matching that contains precisely one edge in $C$.
 \end{pro}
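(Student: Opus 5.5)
The statement to prove is Proposition~\ref{pro:separating}: an edge cut $C=\partial(X)$ of a matching covered graph $G$ is separating if and only if every edge of $G$ lies in a perfect matching $M$ with $|M\cap C|=1$. I will prove the two directions separately. Both rest on one correspondence: a perfect matching of $G/\overline{X}$ together with a perfect matching of $G/X$ that use the same edge of $C$ combine into a perfect matching of $G$ meeting $C$ exactly once, and conversely.

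For sufficiency, assume the matching condition holds. I first check that $G/\overline{X}$ and $G/X$ are connected. Since $G$ is connected, $C$ is nonempty, so the contraction vertex is adjacent to every component of $G[X]$; hence $G/\overline{X}$ is connected, and symmetrically for $G/X$. Now take an edge $f$ of $G/\overline{X}$; it is also an edge of $G$. By hypothesis $f\in M$ for some perfect matching $M$ of $G$ with $M\cap C=\{e\}$. Restricting $M$ to the edges of $G[X]$ together with $e$ gives a perfect matching of $G/\overline{X}$: every vertex of $X$ is covered exactly once, and the contracted vertex is covered exactly once, by $e$. This matching contains $f$, so $G/\overline{X}$ is matching covered, and by symmetry $G/X$ is as well. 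Thus $C$ is separating.

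For necessity, assume both contractions are matching covered and fix an edge $f$ of $G$. Suppose first that $f\in C$. Choose a perfect matching $M_1$ of $G/\overline{X}$ containing $f$ and a perfect matching $M_2$ of $G/X$ containing $f$. Then $M_1\cup M_2$ is a perfect matching of $G$ whose intersection with $C$ is $\{f\}$. Suppose instead that $f$ has both ends in $X$. Choose a perfect matching $M_1$ of $G/\overline{X}$ containing $f$; it uses exactly one edge $e\in C$, namely the edge at the contraction vertex. Next choose a perfect matching $M_2$ of $G/X$ containing $e$, which exists because $G/X$ is matching covered. Then $M_1\cup M_2$ is a perfect matching of $G$ containing $f$ with exactly one edge in $C$. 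The case where $f$ has both ends in $\overline{X}$ is symmetric.

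Two points need care. The first is the parity bookkeeping in the gluing step. Each vertex of $X$ is covered by $M_1$, and each vertex of $\overline{X}$ by $M_2$. The single $C$-edge $e$ appears in both matchings and is counted once in the union. No other $C$-edge can appear, since each contraction vertex is covered exactly once. The second is that multiple edges, which the paper allows, cause no difficulty: edges are tracked as edges rather than as vertex pairs. I expect no real obstacle here. The only mildly delicate point is the connectivity of the contractions in the sufficiency direction, which uses $C\neq\emptyset$. Tutte's Theorem~\ref{thm:Tutte} is not needed for this characterization.
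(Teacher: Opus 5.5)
Your argument is correct and is the standard one behind the cited Theorem 4.2 of Lucchesi and Murty; the paper itself gives no proof. The key observation is that perfect matchings of $G$ meeting $C$ exactly once correspond to pairs of perfect matchings of the two $C$-contractions that share their $C$-edge, and you are right that Tutte's theorem, despite the paper's lead-in, is not needed. One small wording point: $G/\overline{X}$ is connected because connectivity of $G$ forces every component of $G[X]$ to send an edge into $\overline{X}$; the weaker fact $C\neq\emptyset$ does not by itself give this.
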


 \subsection{ The Splicing of Two Graphs and Robust Cuts }

 Let $G$ and $H$ be two vertex-disjoint graphs and let $u\in V(G)$ and $v\in V(H)$ such that $d_G(u)=d_H(v)$.
 Moreover, let $\theta$ be a given bijection between $\partial_G(u)$ and $\partial_H(v)$.
 We denote by $(G(u)\odot H(v))_\theta$ the graph obtained from the union of $G-u$ and $H-v$ by joining, for each edge $e$ in $\partial_H(v)$, the end of $e$ in $H-v$ to the end of $\theta(e)$ in $G-u$;
 and refer to $(G(u)\odot H(v))_\theta$ as a \emph{splicing} of $G$ (at $u$) and $H$ (at $v$), with respect to the bijection $\theta$. 
 In general, a splicing of two graphs $G$ and $H$ depends on the choice of $u$, $v$ and $\theta$.
 With a slight abuse of notation, we will use the same label of the edge (or vertex) in $(G(u)\odot H(v))_{\theta}$ as in $G$ or $H$, and vice versa.
 The following proposition follows directly from the definition of matching covered graphs.

 \begin{pro}[Theorem 2.13 in \cite{Lucchesi2024}]\label{pro:MC_IS_MC}
 	The splicing of two matching covered graphs is also matching covered.
 \end{pro}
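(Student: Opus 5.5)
We need to prove Proposition (Theorem 2.13 in Lucchesi–Murty): a splicing of two matching covered graphs is matching covered. We must show the splicing $G' = (G(u)\odot H(v))_\theta$ is connected and that every edge lies in a perfect matching of $G'$.

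The plan is to build perfect matchings of $G'$ by gluing perfect matchings of $G$ and $H$ along the identified cut $C=\partial_{G'}(V(G)-u)$. By construction, $C$ corresponds edge by edge to $\partial_G(u)$ and, through $\theta$, to $\partial_H(v)$.

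First, connectivity. Every component of $G-u$ contains a neighbour of $u$, since $G$ is connected, and the same holds for $H-v$. Each edge of $C$ joins $G-u$ to $H-v$. Hence every vertex of $G'$ reaches $C$, and any two edges of $C$ are linked through $G-u$ once we know $G-u$ is connected. The safer route is simply: $G'$ is obtained from the connected graphs $G$ and $H$, and every component of $G-u$ or $H-v$ touches $C$ and therefore attaches to the other side. Since $H-v$ together with any edge of $C$ still attaches to every component of $G-u$ via that component's own $C$-edges, the graph $G'$ is connected. It also has at least one edge, because $d_G(u)\ge 1$.

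Second, the gluing lemma. Let $M$ be a perfect matching of $G$ and $N$ a perfect matching of $H$ with $\theta(M\cap\partial_G(u))=N\cap\partial_H(v)$; that is, both use the same identified edge $e$. Then $(M-e)\cup(N-\theta(e))\cup\{e\}$, read in $G'$, is a perfect matching of $G'$:
\begin{itemize}
\item every vertex of $G-u$ is covered exactly once, either by $M-e$ or by $e$;
\item symmetrically, every vertex of $H-v$ is covered exactly once.
\end{itemize}

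Third, coverage of every edge $f$ of $G'$.
\begin{itemize}
\item If $f$ lies in $G-u$, take a perfect matching $M$ of $G$ containing $f$, which exists because $G$ is matching covered. $M$ uses exactly one edge $e\in\partial_G(u)$. Take a perfect matching $N$ of $H$ containing $\theta(e)$, which exists because $H$ is matching covered, and glue the two.
\item If $f$ lies in $H-v$, argue symmetrically.
\item If $f\in C$, choose perfect matchings of $G$ and $H$ containing the two corresponding copies of $f$ and glue.
\end{itemize}

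The only mildly delicate point is the connectivity bookkeeping. The matching part is routine, since any perfect matching meets a vertex cut $\partial(u)$ in exactly one edge.
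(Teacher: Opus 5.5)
Your gluing lemma and your edge-coverage step are correct. They are the routine argument the paper has in mind when it says the proposition follows directly from the definition, citing Lucchesi--Murty. The gap is in the connectivity step.

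Your ``safer route'' uses only that every component of $G-u$ and of $H-v$ sends an edge into $C$. It then tacitly treats $H-v$ as a single connected piece. That is not enough: the argument never uses the matching hypotheses, so it would prove that a splicing of any two connected graphs is connected, which is false. For example, let $G$ be the path $aub$ and $H$ the path $cvd$, and let $\theta$ pair $ua$ with $vc$ and $ub$ with $vd$. The splicing is the disconnected graph with edges $ac$ and $bd$. Your first route correctly identifies what is needed, namely that $G-u$ is connected, but you never prove it.

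The missing fact is that deleting one vertex from a matching covered graph leaves a connected graph; equivalently, matching covered graphs on at least four vertices are $2$-connected. It follows quickly from Tutte's theorem. Suppose $G-w$ had an even component $K$. Since $G$ is connected, some edge $wx$ has $x\in K$. No perfect matching contains $wx$, because $K-x$ has odd order and its vertices have no neighbours outside $K\cup\{w\}$. This contradicts $G$ being matching covered, so every component of $G-w$ is odd. Tutte's theorem gives $o(G-w)\le 1$, so $G-w$ has exactly one component.

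Apply this to $G-u$ and $H-v$; since $C\neq\emptyset$ joins them, the splicing is connected. With that added, your proof is complete.
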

A set of vertices $S\subseteq V(G)$ \emph{covers} a set of edges $F\subseteq E(G)$ if every edge of $F$ is incident with some vertex in $S$.
 \begin{thm}{\rm\cite{Carvalho2004}}\label{thm:2-cut}
 	Let $G$ be a matching covered graph, and let $\partial(X)$ be a nontrivial separating
 	cut of $G$ such that both $G/X$ and $G/\overline{X}$ are bricks. Then, $G$ is a brick if and only if no pair
 	of vertices of $G$, one in $X$, the other in $\overline{X}$, covers the set of edges of $\partial(X)$.
 \end{thm}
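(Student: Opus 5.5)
The statement to prove is Theorem~\ref{thm:2-cut}: for a nontrivial separating cut $\partial(X)$ whose two contractions are bricks, $G$ is a brick if and only if no pair of vertices, one in $X$ and one in $\overline{X}$, covers $\partial(X)$. I will use the Edmonds--Lov\'asz--Pulleyblank characterization: a graph is a brick iff it is 3-connected and bicritical. Let $G_1=G/(\overline{X}\to\overline{x})$ and $G_2=G/(X\to x)$. I will first establish bicriticality of $G$ unconditionally, and then show that 3-connectivity is equivalent to the covering condition.

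\emph{Necessity.} Suppose $u\in X$ and $v\in\overline{X}$ cover $\partial(X)$. Since the cut is nontrivial, $G-u-v$ has no edges between $X-u$ and $\overline{X}-v$, and both sides are nonempty. So $G-u-v$ is disconnected, and $G$ is not 3-connected, hence not a brick.

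\emph{Bicriticality.} Take distinct vertices $a,b$ of $G$.
\begin{enumerate}
\item If $a,b\in X$, use that $G_1$ is bicritical: $G_1-a-b$ has a perfect matching $M_1$, which covers $\overline{x}$ by an edge $e\in\partial(X)$. Since $G_2$ is matching covered, $e$ lies in a perfect matching $M_2$ of $G_2$. Then $M_1\cup M_2$, glued along $e$, is a perfect matching of $G-a-b$. The case $a,b\in\overline{X}$ is symmetric.
\item If $a\in X$ and $b\in\overline{X}$, then $G_1-a-\overline{x}$ has a perfect matching $N_1$ and $G_2-x-b$ has a perfect matching $N_2$. Their union is a perfect matching of $G-a-b$, because it uses no cut edge and both sides become even after deleting $a$ and $b$.
\end{enumerate}
So $G$ is always bicritical.

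\emph{3-connectivity (sufficiency).} Assume no cross pair covers $\partial(X)$, and suppose $\{a,b\}$ separates $G$.
\begin{enumerate}
\item First, $G$ is 2-connected. Each $G_i$ is 3-connected, so every $G[X]$-piece of $G$ stays attached to the cut edges. One can also use that a bicritical graph with at least four vertices is 2-connected.
\item If $a,b\in X$, then $G_1-a-b$ is connected, since $G_1$ is 3-connected. $G[\overline{X}]$ is connected because $G_2-x$ is connected. Every component of $G[X]-a-b$ is joined either to $\overline{X}$ through some cut edge not incident to $a$ or $b$, or to the rest of $X$. This is exactly what connectivity of $G_1-a-b$ gives, translated back to $G$. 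Hence $G-a-b$ is connected. The case $a,b\in\overline{X}$ is symmetric.
\item If $a\in X$ and $b\in\overline{X}$, then $G_1-a$ and $G_2-b$ are 2-connected, so $G[X]-a=G_1-a-\overline{x}$ and $G[\overline{X}]-b$ are connected. Since $\{a,b\}$ does not cover $\partial(X)$, some cut edge joins $X-a$ to $\overline{X}-b$. Therefore $G-a-b$ is connected.
\end{enumerate}

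The step needing the most care is the translation in case 2 of the sufficiency argument. Connectivity of $G_1-a-b$ may route through the contracted vertex $\overline{x}$. In $G$, such a route is replaced by a path through the connected set $\overline{X}$, which is untouched by removing $a$ and $b$. With this the proof is complete.
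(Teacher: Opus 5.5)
Your proof is correct. The paper does not prove this statement at all: it quotes it from Carvalho, Lucchesi and Murty \cite{Carvalho2004} and uses it only to certify that the graphs $G_i$ of the family $\mathcal{G}$ are bricks. So there is no internal proof to compare against, and your argument via the Edmonds--Lov\'asz--Pulleyblank characterization stands as a complete, self-contained derivation.

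The structure is sound. Bicriticality of $G$ is inherited unconditionally from the two brick contractions: glue $M_1$ and $M_2$ along the unique cut edge at $\overline{x}$, or take $N_1\cup N_2$ when the deleted vertices lie on opposite shores. 3-connectivity then reduces to three facts: $G[X]-a$ and $G[\overline{X}]$ are connected, $G_1-a-b$ is connected, and some cut edge avoids a given cross pair. Your handling of paths through the contracted vertex $\overline{x}$ in the same-shore case is exactly what is needed.

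Two small remarks:
\begin{itemize}
\item Your item~1 on 2-connectivity is superfluous. Since $|V(G)|\ge 6$ and you verify that $G-a-b$ is connected for every pair $\{a,b\}$, 3-connectivity already follows. Its first justification (``every $G[X]$-piece stays attached'') would not stand on its own, although the fallback, that a bicritical graph on at least four vertices is 2-connected, is fine.
\item The hypothesis that $\partial(X)$ is separating is never used beyond $G_2$ being matching covered, and that already follows from $G_2$ being a brick.
\end{itemize}
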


 For convenience, we extend the definition of removable to edges not in $G$: if $e\notin E(G)$, we simply regard $e$ as a removable edge of $G$. This convention will facilitate the statement of the following lemmas concerning contractions.

 \begin{lem}[Corollary 8.9 in \cite{Lucchesi2024}]\label{lem:re_also_re}
 	Let $G$ be a matching covered graph, and let $C$ be a separating cut of $G$.
 	If an edge $e$ is removable in both $C$-contractions of $G$, then $e$ is removable in $G$.
 \end{lem}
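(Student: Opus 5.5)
The plan is to verify the definition of matching covered for $G-e$ directly. Every edge of $G-e$ should lie in a perfect matching avoiding $e$, and $G-e$ should be connected. Write $C=\partial(X)$, $G_1=G/(\overline{X}\to \bar x)$ and $G_2=G/(X\to x)$. We regard the edges of $C$ as edges of both $G_1$ and $G_2$, and every other edge of $G$ as an edge of exactly one of them. By the convention stated just before the lemma, if $e$ is not an edge of $G_i$, then $G_i-e=G_i$, which is matching covered because $C$ is separating. So in all cases $G_1-e$ and $G_2-e$ are matching covered.

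\textbf{Covering each edge.} Let $f$ be an edge of $G-e$. By symmetry we may assume $f\in E(G_1)$; this includes the case $f\in C$. Since $G_1-e$ is matching covered, it has a perfect matching $M_1$ containing $f$. The contraction vertex $\bar x$ has to be matched, so $M_1$ contains exactly one edge $g$ of $C$. Since $g\in C\subseteq E(G_2)$ and $g\neq e$, the matching covered graph $G_2-e$ has a perfect matching $M_2$ containing $g$, and $M_2\cap C=\{g\}$. Then $M=M_1\cup M_2$ is a perfect matching of $G$: each vertex of $X$ is covered exactly once by $M_1$, each vertex of $\overline{X}$ exactly once by $M_2$, and the two matchings agree on the unique cut edge $g$. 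Moreover $f\in M$ and $e\notin M$. Hence every edge of $G-e$ lies in a perfect matching of $G-e$.

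\textbf{Connectivity.} This is the only mildly delicate point, in the case $e\in C$. The graph $G_1-e$ is connected, so $(G-e)[X]$ together with the remaining cut edges is connected through $\bar x$. In particular, because $G_1-e$ is matching covered with at least $2$ vertices, $\bar x$ has an incident edge in $G_1-e$. So $C-e\neq\emptyset$.

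Each component of $(G-e)[X]$ must send an edge of $C-e$ to $\overline{X}$, since otherwise $G_1-e$ would be disconnected. Likewise, each component of $(G-e)[\overline{X}]$ sends an edge of $C-e$ to $X$, using the connectivity of $G_2-e$.

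Now take any vertex $a$. Its component in $(G-e)[X]$ or $(G-e)[\overline{X}]$ reaches the other side through an edge of $C-e$. To see that everything ends up in one component, note that in $G_1-e$ all cut edges are incident with the single vertex $\bar x$. So any two vertices of $X$ are linked in $G-e$ by walks that alternate within $X$ and through $\overline{X}$. Each passage through $\bar x$ between two cut edges $c,c'$ is replaced by a path in $(G-e)[\overline{X}]\cup\{c,c'\}$ between the corresponding ends, which exists since $G_2-e$ is connected; each such passage is handled the same way.

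I expect this connectivity bookkeeping to be the main obstacle, though a routine one. The matching-combination step is straightforward.
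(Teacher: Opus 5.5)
Your matching-combination step is correct. The gap is in the connectivity step, at the sentence claiming that a path in $(G-e)[\overline{X}]\cup\{c,c'\}$ between the ends of $c$ and $c'$ ``exists since $G_2-e$ is connected''. Connectivity of $G_2-e$ only gives a path in $G_2-e$ between the $\overline{X}$-ends of $c$ and $c'$. That path may pass through the contraction vertex $x$, i.e.\ leave $\overline{X}$ along another cut edge. You are then back to needing a connection inside $X$, so ``handling each passage the same way'' is circular.

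No bookkeeping can repair this, because connectivity of both contractions does not imply connectivity of the graph. Take $X=\{a_1,a_2,a_3\}$, $\overline{X}=\{b_1,b_2,b_3\}$ and the edges $a_1b_1,a_2b_2,a_2a_3,b_2b_3$. Both contractions are connected, and $b_1,b_2$ are joined in the second one only through $x$. Yet the graph has two components.

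The missing ingredient is that a matching covered graph on at least four vertices is $2$-connected. If $v$ were a cut vertex, $G-v$ would have exactly one odd component, and an edge from $v$ into an even component would lie in no perfect matching. So if $|\overline{X}|\ge 3$, then $(G_2-e)-x=(G-e)[\overline{X}]$ is connected, and if $|\overline{X}|=1$ this is trivial. The same holds for $(G-e)[X]$, and together with $C-e\neq\emptyset$ this gives connectivity of $G-e$ at once.

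A parity argument also works. If $G-e$ had two components, each would contain an edge of $C-e$. For a component $K$, take one of your glued matchings whose single cut edge lies in $K$ and another whose cut edge lies in a different component. Under the first, $|V(K)\cap X|$ is odd; under the second, it is even, a contradiction.

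Finally, the paper does not prove this lemma; it cites it from Lucchesi and Murty. Within the paper's framework the quickest proof is to observe that $G-e$ is exactly a splicing of $G_1-e$ and $G_2-e$ at the contraction vertices, both of degree $|C-e|\ge 1$. Proposition~\ref{pro:MC_IS_MC} then applies directly, and your argument, once repaired, is an unpacking of that proposition.
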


 \begin{lem}{\rm\cite{CLM02II}}\label{thm:removable doubleton}
 	Let $C:=\partial(X)$ be a separating but not tight cut of a matching covered graph $G$ and let $H:=G/(\overline{X}\rightarrow \overline{x})$.
 	Assume that $H$ is a brick, and $R$ is a removable doubleton of $H$.
 	If $R\cap C=\emptyset$ or if the edge of $R\cap C$ is removable in $G/X$ then $R\setminus C$ contains an edge that is removable in $G$.
 \end{lem}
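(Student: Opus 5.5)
Write $R=\{e,f\}$. Since $R$ is a removable doubleton of the brick $H$, the graph $H-e-f$ is matching covered. Also, $f$ is the unique edge not admissible in $H-e$, so every perfect matching of $H$ containing $f$ also contains $e$; symmetrically, every one containing $e$ contains $f$. The plan has two parts. First, show that every edge of $G$ other than the edges of $R\setminus C$ is admissible in $G-R$. Second, show that some edge of $R\setminus C$ lies in a perfect matching of $G$ avoiding the other edge of $R$. The second part must use the hypothesis that $C$ is not tight. Combined with the first part, it gives that this edge is admissible in $G$ minus the other edge of $R$, which is then removable in $G$.

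For the first part I use the gluing principle behind Proposition~\ref{pro:separating}. A perfect matching $M_1$ of $H$ and a perfect matching $M_2$ of $G/X$ that share their unique edge of $C$ combine into a perfect matching of $G$.
\begin{itemize}
\item An edge $g$ of $H$ not in $R$ lies in a perfect matching $M_1$ of $H-e-f$; let $c$ be its edge in $C$. Since $G/X$ is matching covered, some perfect matching $M_2$ of $G/X$ contains $c$.
\item For $g\in E(G[\overline X])$, take a perfect matching $M_2$ of $G/X$ containing $g$, and let $c$ be its edge in $C$. If $c\notin R$, match $c$ inside $H-e-f$. In the case $R\cap C=\{f\}$, we need $c\neq f$. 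This is where the hypothesis that $f$ is removable in $G/X$ enters: choose $M_2$ as a perfect matching of $(G/X)-f$ containing $g$.
\end{itemize}
So every edge outside $R\setminus C$ is admissible in $G-R$. When $R\cap C=\emptyset$, the same gluing with $H-e-f$ also covers the edges of $C$.

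The second part is the main obstacle. Matchings of $G$ meeting $C$ in exactly one edge restrict to matchings of $H$, and by the doubleton property these never contain exactly one edge of $R$. So I must use perfect matchings with at least three edges in $C$, which exist because $C$ is separating but not tight. I argue by contradiction.

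Suppose that for each $h\in R\setminus C$, the other edge $h'$ of $R$ is not admissible in $G-h$; by the first part this is the only possible obstruction. Then, by Tutte's Theorem~\ref{thm:Tutte} applied to $G-h-h'$ versus $G-h$ in the standard way, $G-h$ has a barrier $B$ containing both ends of $h'$.

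I would then transfer $B$ to $H-h$. Let $B'$ be $(B\cap X)$ together with the contraction vertex $\overline x$ if $B$ meets $\overline X$ suitably. Using the fact that $C$ is separating and counting odd components, show that $B'$ (or $B\cap X$) is a barrier of $H-h$. Moreover, the equality $o(G-h-B)=|B|$ should force every perfect matching of $G$ to use exactly one edge of $C$.

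Concretely, I would show that $\overline X$ is contained in a single component of $(G-h)-B$, or in $B$ together with components that behave like the vertex $\overline x$. The parity count then gives $|M\cap C|=1$ for every perfect matching $M$, contradicting that $C$ is not tight. The delicate point is the case analysis on how $B$ intersects $\overline X$. There I would first try to use the 3-connectivity and bicriticality of $H$ to rule out $B$ meeting both shores in a way incompatible with $H$ being a brick.
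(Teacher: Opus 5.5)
\textbf{Verdict: there is a genuine gap.} Your first part is sound: $G-R$ is the splicing of $H-R$ and $(G/X)-(R\cap C)$, so it is matching covered by Proposition~\ref{pro:MC_IS_MC}. You are also right that only perfect matchings with at least three edges in $C$ can separate $e$ from $f$. The gap is the second part, which is the whole content of the lemma. The transfer of $B$ to $H-h$, the location of $\overline X$ relative to $B$, and the final parity count are only announced. Moreover, the case you defer is unavoidable. When $R\cap C=\{f\}$ the only choice is $h=e$, and any barrier of $G-e$ containing both ends of $f$ contains the end of $f$ in $\overline X$. So the option ``$\overline X$ lies in one component of $(G-h)-B$'' never occurs there, and you do not say how 3-connectivity or bicriticality of $H$ would control such a $B$.

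There is also a direction slip. As first stated, your second part asks for a perfect matching containing $h\in R\setminus C$ but not $h'$, which would make $h'$ removable, not $h$. When $R\cap C=\{f\}$ that target is actually impossible: no perfect matching of $G$ contains $e$ but not $f$, as the count below shows. Your later contradiction hypothesis ($h'$ inadmissible in $G-h$) has the correct direction.

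\textbf{The missing idea} is that the brick property of $H$ (no nontrivial tight cuts) determines the structure of $H-R$.
\begin{itemize}
\item A barrier $B$ of $H-e$ containing both ends of $f$ is a barrier of the matching covered graph $H-R$. So $B$ is independent there, $H-R-B$ has exactly $|B|$ components, all odd, and $e$ joins two different ones.
\item Since $e$ and $f$ lie in the same perfect matchings of $H$, a direct count shows that $\partial_H(L)$ is tight for every such component $L$. Hence $|L|=1$.
\item Thus $H-R$ is bipartite, and the two edges of $R$ lie inside different colour classes.
\end{itemize}
Let $r$ be the edge of $R$ inside the class $Z$ containing $\overline x$, and let $r'$ be the other edge. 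Then $r'\in E(G[X])$, and every edge of $C\setminus\{r\}$ joins $\overline X$ to the other class $Z'$. For a perfect matching $M$ of $G$, count the vertices of $Z\setminus\{\overline x\}$ and of $Z'$ covered by $M$. This gives
\[
|M\cap C| \;=\; 1+2\,|M\cap\{r\}|-2\,|M\cap\{r'\}| .
\]
Because $C$ is not tight, you can choose $M$ with $|M\cap C|\ge 3$. This forces $r\in M$ and $r'\notin M$, so $r$ is admissible in $G-r'$. Together with $G-R$ being matching covered, this makes $r'\in R\setminus C$ removable in $G$. This structural input is exactly what your barrier-transfer sketch needs and does not supply.
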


 A separating cut $C$ of a brick $G$ is a {\em{robust cut}} if both $C$-contractions of $G$ are near-bricks.


\begin{lem}{\rm\cite{Carvalho2004,HLX25}}\label{lem:Robust-solid}
	Every nonsolid brick $G$ has a robust cut $\partial(X)$ such that there exists a subset $X'$ of $X$ and a subset $X''$ of $\overline{X}$ such that $G/\overline{X'}$ is a solid brick, $G/\overline{X''}$ is a brick and the graph $H$, obtained from $G$ by contracting $X'$ and $X''$ to single vertices $x'$ and $x''$, respectively, is bipartite and matching covered, where $x'$ and $x''$ lie in different color classes of $H$.
\end{lem}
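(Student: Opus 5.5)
The plan is to start from the existence of robust cuts in nonsolid bricks, due to Carvalho, Lucchesi and Murty \cite{Carvalho2004}. Among all robust cuts of $G$ I will choose one, $\partial(X)$, minimizing $|X|$; the roles of $X$ and $\overline{X}$ may be swapped, so take the minimum over all shores. By robustness, $G_1:=G/(\overline{X}\to\overline{x})$ and $G_2:=G/(X\to x)$ are near-bricks. I then apply the tight cut decomposition to each of them. In a near-brick every nontrivial tight cut can be taken to be a barrier cut (the Edmonds--Lov\'asz--Pulleyblank theorem). The unique brick of $G_1$ can therefore be written as $G_1/\overline{X'}=G/\overline{X'}$ for some $X'\subseteq X$. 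This relies on the fact that the contracted vertex $\overline{x}$ ends up on the bipartite side of a maximal tight cut $\partial(X')$ of $G_1$, which I justify in the next paragraph. The remaining pieces, $G_1/X'$ with $\overline{x}$ kept, form a bipartite matching covered graph. Symmetrically I define $X''\subseteq\overline{X}$ from $G_2$, so that $G/\overline{X''}$ is a brick.

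Justifying that $\overline{x}$ lies on the bipartite side takes two steps. First, $\overline{x}\notin X'$ or its analogue is automatic once I choose $X'$ to be the shore containing the brick and not containing $\overline{x}$; I must still check that this shore is a subset of $X$. Second, if the brick of $G_1$ contained $\overline{x}$, then $\partial(X')$ would be a tight cut of $G_1$ whose shore avoiding $\overline{x}$ is a proper bipartite piece. Lifting it to $G$ would show that $\partial(X\setminus X')$ or a similar cut is robust with a smaller shore. That contradicts the minimality of $|X|$.

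Next comes bipartiteness of $H$, the graph obtained from $G$ by contracting $X'$ to $x'$ and $X''$ to $x''$. I will glue the bipartite graph $G_1/(X'\to x')$, which contains $\overline{x}$, with the bipartite graph $G_2/(X''\to x'')$, which contains $x$, along the cut $\partial(X)$. This is exactly a splicing at $\overline{x}$ and $x$. Each of the two graphs is matching covered, so $H$ is matching covered by Proposition~\ref{pro:MC_IS_MC}. For the colour classes, in a bipartite matching covered graph the barrier structure forces the contracted vertex $\overline{x}$ to lie in the class opposite $x'$. Likewise $x$ lies opposite $x''$. The splicing identifies the neighbourhoods of $\overline{x}$ and $x$, so the neighbours of $\overline{x}$ in $X$ lie in the class of $x'$, and the neighbours of $x$ in $\overline{X}$ lie in the class of $x''$. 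These two neighbour sets are joined by the edges of $\partial(X)$. Hence the bipartitions glue consistently into a bipartition of $H$ with $x'$ and $x''$ in different classes. This also shows $H$ is bipartite.

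The main obstacle is solidity of $G/\overline{X'}$. Suppose it were not solid. Then it has a robust (nontight separating) cut $\partial(Y)$ with $Y\subseteq X'$, chosen so that $Y$ avoids the contracted vertex. I will argue that $\partial(Y)$ is separating in $G$, using Proposition~\ref{pro:separating}: perfect matchings of $G/\overline{X'}$ that meet $\partial(Y)$ once extend through the bipartite part to perfect matchings of $G$. I will then argue that $\partial(Y)$ is nontight in $G$ and robust, with both contractions near-bricks, because one contraction is a near-brick of the small brick and the other decomposes into the brick of $G_2$ plus bipartite pieces. Then $|Y|<|X|$ contradicts minimality. Checking that the $\partial(Y)$-contraction containing $\overline{X}$ is still a near-brick is the delicate point: it needs the brick count to be additive over tight cut decompositions. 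If this fails for an arbitrary choice of $\partial(Y)$, I would first try choosing $\partial(Y)$ itself to be a robust cut of the brick $G/\overline{X'}$ of minimum shore.
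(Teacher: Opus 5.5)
The paper does not prove this lemma. It imports it from \cite{Carvalho2004,HLX25}, so your argument has to stand on its own, and as written it does not.

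\textbf{The justification for the position of $\overline{x}$ is wrong, though fixable.} Your conclusions about $X'$, $X''$ and the bipartite gluing of $H$ are correct, but the reason you give for $\overline{x}$ lying on the bipartite side fails. $X\setminus X'$ has even size, so $\partial(X\setminus X')$ is not a legitimate odd cut. The honest lift of the tight cut, $\partial_G(X')$, has the contraction $G/\overline{X'}=G_1/\overline{X'}$, which is bipartite, so it is not robust either. Minimality of $|X|$ is therefore never contradicted. The correct argument is a barrier argument, and it needs no minimality. Let $\partial_{G_1}(Z)$ be a nontrivial tight cut with $G_1/(\overline{Z}\to\overline{z})$ bipartite. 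The colour class $B$ not containing $\overline{z}$ is a barrier of $G_1$ with $|B|\ge 2$. If $\overline{x}\notin B$, expanding $\overline{x}$ back into the odd set $\overline{X}$ keeps $o(G-B)\ge |B|$, which contradicts bicriticality of $G$. Hence $\overline{x}\in B\subseteq Z$ for every such cut. This yields $X'$, and it also puts $\overline{x}$ opposite $x'$.

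\textbf{The genuine gap is the solidity step.} Take $J=G/\overline{X'}$ nonsolid and $\partial_J(Y)$ robust with $Y\subseteq X'$. You do get that $\partial_G(Y)$ is a nontrivial separating, hence nontight, cut with $|Y|<|X|$, and that $G/\overline{Y}=J/\overline{Y}$ is a near-brick. Robustness also needs $b(G/Y)=1$, and your reason for that is false. $G/Y$ is not ``the brick of $G_2$ plus bipartite pieces''. It is the splicing, at $\overline{x'}$ and $x'$, of two near-bricks:
\begin{itemize}
\item $J/Y$, which is a near-brick because $\partial_J(Y)$ is robust in $J$;
\item $G/X'$, which is a near-brick because $C$ is tight in $G/X'$. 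Indeed, all edges of $C$ end in the colour class of $x'$ in $G_1/X'$, and the shore $(X\setminus X')\cup\{x'\}$ has exactly one more vertex in that class, so every perfect matching meets $C$ once and $b(G/X')=b(G_2)=1$.
\end{itemize}
Additivity of $b$ therefore works against you. If the splicing cut were tight in $G/Y$, you would get $b(G/Y)=2$, and $\partial_G(Y)$ would not be robust. Nothing in your plan shows that the two bricks merge in $G/Y$. Re-choosing $\partial(Y)$ with minimum shore in $J$ does not address this.

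What is missing is a real ingredient from the Carvalho--Lucchesi--Murty theory of robust cuts. You need either a proof that $b(G/Y)=1$ for a suitable $Y$, or a different extremal choice or lemma that produces a robust cut of $G$ with a shore strictly inside $X$. That ingredient is exactly the nontrivial content the paper takes from \cite{Carvalho2004,HLX25}.
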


\begin{lem}{\rm\cite{LuX24}}\label{lem:2-bi-removable}
	Let $\partial(X_i)$ be an edge cut of a brick $G$ such that $G/(\overline{X_i}\rightarrow \overline{x_i})$ is a brick, for $i\in\{1,2\}$, and $(G/(X_1\rightarrow x_1))/(X_2\rightarrow x_2)$ is a matching covered bipartite graph $H$. Then every edge of $\partial_H(x_1)$ is removable in $H$.
\end{lem}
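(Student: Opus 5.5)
The plan is to argue by contradiction, using the standard characterization of removable edges in bipartite matching covered graphs. Let $A$ and $B$ be the color classes of $H$. Since $\partial(X_1)$ and $\partial(X_2)$ are both contracted to vertices of a bipartite matching covered graph, the usual parity argument shows that $x_1$ and $x_2$ lie in different classes, say $x_1\in A$ and $x_2\in B$. Recall that a bipartite graph $H$ with $|A|=|B|$ is matching covered if and only if it is connected and $|N_H(S)|\ge |S|+1$ for every nonempty proper subset $S\subset A$. Fix an edge $e=x_1y\in\partial_H(x_1)$ and suppose $H-e$ is not matching covered. Then there is a nonempty proper subset $S\subset A$ with $T:=N_{H-e}(S)$ satisfying $|T|=|S|$. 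Because $|N_H(S)|>|S|$, the edge $e$ must leave $S$ towards $B\setminus T$. Hence $x_1\in S$ and $y\in B\setminus T$. In $H-e$, every edge leaving $S\cup T$ therefore goes from $T$ to $A\setminus S$.

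I would then lift this structure back to $G$. Let $a\in X_1$ be the end of $e$ in $G$. Let $Z$ be obtained from $S\cup T$ by expanding $x_1$ into $X_1$, and also expanding $x_2$ into $X_2$ if $x_2\in T$. Every edge of $\partial_G(Z)$ is then either $e$ or an edge joining a vertex of $T$ to a vertex of $A\setminus S$. Here vertices of $T$ are read as actual vertices of $G$, except that $x_2$ is replaced by the set $X_2$. I would apply bicriticality of $G$ to a pair $\{u,v\}$ chosen so that the remaining vertices of $Z$ cannot be perfectly matched. The natural choice is $u=y$, which kills $e$, together with a vertex $v$ of $X_1$, using that $G/\overline{X_1}$ is a brick. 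In $G-u-v$, the set $T$ behaves as a barrier: the odd pieces are $X_1\setminus\{v\}$ (respectively the odd components coming from $S$), they are attached only to $T$, and their number exceeds $|T|$ by the counting $|S|=|T|$. Theorem~\ref{thm:Tutte} then gives a contradiction.

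The degenerate cases have to be handled separately. These are when $S=\{x_1\}$ alone, when $T$ contains $x_2$, and when $A\setminus S$ is empty of real vertices. In these cases I would instead use 3-connectivity of $G$: the vertices $y$ and (a vertex covering $\partial(X_2)$-side edges) would form a 2-vertex cut separating $X_1$ from the rest. This contradicts the fact that a brick is 3-connected.

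I expect the main obstacle to be the parity bookkeeping when $x_2\in T$. There $X_2$ contributes many vertices, and $T$ is no longer a set of single vertices of $G$. In that case I would first use that $G/\overline{X_2}$ is a brick, hence bicritical, so that $X_2$ can absorb exactly one matching edge from $\partial(X_2)$. That lets me treat $X_2$ as a single vertex in the Tutte count, as $x_2$ is treated in $H$.
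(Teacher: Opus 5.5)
\textbf{There is a genuine gap: the central Tutte count fails, and the degenerate cases are not actually handled.}

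\emph{The main step.} With $u=y$ and $v\in X_1$, the set $X_1\setminus\{v\}$ is not an odd piece. Since $G/\overline{X_1}$ has even order, $|X_1|$ is odd and so $|X_1\setminus\{v\}|$ is even. Worse, $G/\overline{X_1}$ is bicritical, so $G[X_1]-v=(G/\overline{X_1})-\overline{x_1}-v$ has a perfect matching, and $X_1\setminus\{v\}$ contributes no odd component at all. Inside $Z$ you are left with the $|S|-1=|T|-1$ singletons of $S\setminus\{x_1\}$. Parity then only gives $o(G-u-v-T)\ge |T|$, which is compatible with $G-u-v$ having a perfect matching. So the claim ``their number exceeds $|T|$'' is false, and the main case collapses.

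\emph{The degenerate cases.} You do not mention the case $y=x_2$ (i.e.\ $e\in E_H[x_1,x_2]$), where $u=y$ is not a vertex of $G$. When $x_2\in T$, the edges of $\partial(X_1)$ may end in $y$, in other vertices of $T$, and in several distinct vertices of $X_2$, so in general no 2-vertex cut exists. Nor does $X_2$ ``absorb exactly one matching edge'': $\partial(X_2)$ is a nontrivial, hence non-tight, cut of the brick $G$. Finally, the case ``$A\setminus S$ has no real vertex'' cannot occur, since $x_1\in S$.

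\textbf{Missing idea and a fix.} You should take \emph{both} deleted vertices from the would-be barrier, i.e.\ exhibit a barrier of $G$ with at least two vertices, which a bicritical graph cannot have. The cleanest choice is on the complementary side.

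Put $S'=A\setminus S$ and $T'=B\setminus T$. Then $|S'|=|T'|$, $S'\neq\emptyset$ consists of genuine vertices of $G$, $y\in T'$, and $e$ is the only edge of $H$ between $T'$ and $S$. Let $a$ be the end of $e$ in $X_1$.
In $G-(S'\cup\{a\})$:
\begin{itemize}
\item every genuine vertex of $T'$ is isolated;
\item if $x_2\in T'$, then $X_2$, which has odd order because $G/\overline{X_2}$ is a brick, is a union of components and so contains an odd one.
\end{itemize}
Thus $o(G-(S'\cup\{a\}))\ge |S'|$, and by parity $o(G-(S'\cup\{a\}))\ge |S'|+1$. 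Deleting $a$ together with one vertex of $S'$ therefore leaves a graph with no perfect matching, by Theorem~\ref{thm:Tutte}. This contradicts the bicriticality of $G$, and it handles $x_2\in T$, $x_2\in T'$ and $y=x_2$ uniformly.

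Alternatively, when $x_2\notin T$, the set $T\cup\{y\}$ is such a barrier, with $y$ replaced by the end of $e$ in $X_2$ when $y=x_2$. Its odd components include $X_1$ and the singletons of $S\setminus\{x_1\}$.
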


 \subsection{Solitary Edges in a Matching Covered Graph}

\begin{lem}{\rm\cite{DL2026}}\label{lem:H-not-solitary}
	Assume that $\partial(X_i)$ is an edge cut of a simple brick $G$ such that $G/(\overline{X_i}\rightarrow \overline{x_i})$ is a brick, for $i\in\{1,2\}$, and $(G/(X_1\rightarrow x_1))/(X_2\rightarrow x_2)$ is a matching covered bipartite graph $H$. If $|V(H)|\ge4$, then every edge of $E(H)\setminus E_H[x_1,x_2]$ is not solitary in $H$.
\end{lem}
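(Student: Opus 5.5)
We need to propose a proof for Lemma \ref{lem:H-not-solitary}, which concerns the bipartite graph $H$ obtained from the simple brick $G$ by contracting $X_1$ and $X_2$. The plan is to show that every edge $e$ of $H$ not joining $x_1$ and $x_2$ lies in at least two perfect matchings of $H$.

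The first step fixes notation and uses bipartiteness. Since $H$ is bipartite and matching covered, its two color classes have equal size. By hypothesis (as in Lemma \ref{lem:Robust-solid}), $x_1$ and $x_2$ lie in different color classes; call these $A\ni x_1$ and $B\ni x_2$. The standard fact is that an edge $e$ of a bipartite matching covered graph is solitary if and only if no $M$-alternating cycle passes through $e$, where $M$ is the unique perfect matching containing $e$. So for each edge $e=ab$ not in $E_H[x_1,x_2]$, it suffices to produce a perfect matching $M\ni e$ together with an $M$-alternating cycle through $e$. Equivalently, we show that $H-a-b$ has at least two perfect matchings.

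The second step transfers information from $G$. We use that $G$ is a brick, hence bicritical and $3$-connected, and that each $G/\overline{X_i}$ is a brick. Since $|V(H)|\ge4$, the middle part $Y=V(H)\setminus\{x_1,x_2\}$ is nonempty. Take $e=ab$ with, say, $a\in Y$.

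Suppose $e$ is solitary with unique perfect matching $M$. The matching $M$ restricted to $H-a-b$ is then the unique perfect matching of a bipartite graph. A bipartite graph with a unique perfect matching has a vertex of degree $1$, and iterating this gives a triangular ordering. From this structure we will extract a small vertex cut in $G$. Concretely, applying Hall's theorem in $H-a-b$ yields a set $S\subseteq A\setminus\{a\}$ with $|N(S)|$ equal to $|S|$ that is forced, which in turn gives a tight set of $H$ avoiding $e$. Lifting this to $G$, by replacing $x_i$ with $X_i$, produces either a nontrivial tight cut of $G$, contradicting that $G$ is a brick, or a $2$-vertex cut of $G$, contradicting $3$-connectivity.

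Another route is to use Lemma \ref{lem:2-bi-removable}: the edges at $x_1$ are removable in $H$. That lets us alternate through $x_1$, using an edge of $\partial_H(x_1)$ different from the $M$-edge at $x_1$. A removable edge $f$ at $x_1$ with $f\notin M$ lies in some perfect matching $M'$, and $M\triangle M'$ contains an alternating cycle; we then aim to route such a cycle through $e$.

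The main obstacle is guaranteeing that the alternating cycle passes through $e$ itself, not merely that $H$ has several perfect matchings. Here simplicity of $G$ and the exclusion of $E_H[x_1,x_2]$ matter, since parallel or $x_1x_2$ edges can genuinely be solitary. I would first try the barrier argument in $H-a-b$. A forced structure there gives a barrier $B'$ in $H$ with $a,b$ separated. Its preimage in $G$, with $x_1$ and $x_2$ expanded, is a barrier of $G$ of size at least $2$ unless it consists of $x_1$ and $x_2$ alone. A brick has no barrier of size at least $2$, by bicriticality. Handling the case where the barrier consists only of $x_1$ and $x_2$ is exactly where $e\notin E_H[x_1,x_2]$ and simplicity will be used.
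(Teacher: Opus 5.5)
Your proposal has a genuine gap: it never reaches a proof, and part of it rests on a wrong criterion. You say $e$ is solitary iff no $M$-alternating cycle passes through $e$. But an $M$-alternating cycle through $e\in M$ produces a perfect matching \emph{avoiding} $e$. Such a cycle exists for every edge of a matching covered graph on at least four vertices; for example, every edge of $C_6$ is solitary yet lies on one. The correct criterion is the one in your ``Equivalently'' sentence: $e=ab$ is solitary iff $H-a-b$ has a unique perfect matching, i.e.\ no $M$-alternating cycle avoids $a$ and $b$. Your ``main obstacle'' paragraph and your second route both aim at the wrong target, a cycle through $e$.

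The Hall/barrier route is only sketched, and its lifting step fails as stated. If $x_i\in B'$, replacing $x_i$ by $X_i$ adds $|X_i|-1$ vertices to the set without creating new odd components, so the preimage is not a barrier of $G$. The peeling of degree-one vertices of $H-a-b$ also stalls at vertices whose other edges all go to $x_1$ or $x_2$. There simplicity of $G$ gives nothing, and you offer no way to use that $G/\overline{X_i}$ is a brick.

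The missing idea is to apply Lemma \ref{lem:2-bi-removable} to an edge \emph{of} the matching, not one outside it. The paper quotes this lemma from \cite{DL2026} without proof, but with the paper's own tools it is immediate. Suppose $e$ is solitary in $H$ with unique perfect matching $M_e$, and let $f_i$ be the edge of $M_e$ at $x_i$. Since $e$ does not join $x_1$ and $x_2$, some $f_i\neq e$. The hypotheses of Lemma \ref{lem:2-bi-removable} are symmetric in the indices $1,2$, so every edge at $x_1$ and every edge at $x_2$ is removable in $H$. Hence $f_i\in M_e\setminus\{e\}$ is removable, contradicting Proposition \ref{pro:Me-nonre}. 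Your second route used the right lemma but chose $f\notin M$, which yields no contradiction.
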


By the definition of solitary edges, we have the following result.
\begin{pro}\label{pro:solitary}
	 Let $C$ be a separating cut of a matching covered graph $G$, and let $G_1$ and $G_2$ be the two $C$-contractions of $G$. For $e \in E(G)$, if $e$ is solitary in $G$, then $e$ is solitary in $G_i$ when $e \in E(G_i)$ for $i=1,2$. 

\end{pro}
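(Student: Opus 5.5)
The plan is to use a perfect matching that meets the separating cut $C$ in exactly one edge, and to lift the uniqueness of a perfect matching of $G$ to a uniqueness statement in each contraction. Write $C=\partial(X)$, $G_1=G/(\overline{X}\rightarrow \overline{x})$ and $G_2=G/(X\rightarrow x)$.

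\textbf{Step 1: set-up.} Suppose $e$ is solitary in $G$ and, by symmetry, $e\in E(G_1)$. Since $G$ is matching covered and $e$ lies in exactly one perfect matching $M$ of $G$, Proposition~\ref{pro:separating} forces $|M\cap C|=1$. Indeed, that proposition says $e$ lies in some perfect matching of $G$ meeting $C$ in precisely one edge, and $M$ is the only perfect matching containing $e$. Let $M\cap C=\{f\}$. Then $M_1:=M\cap E(G_1)$ is a perfect matching of $G_1$ containing $e$. Here $f$ is regarded as an edge of $\partial_{G_1}(\overline{x})$ covering $\overline{x}$.

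\textbf{Step 2: uniqueness in $G_1$.} Let $N_1$ be any perfect matching of $G_1$ containing $e$. Then $N_1$ contains exactly one edge $g\in C$, namely the one covering $\overline{x}$. Since $C$ is separating, $G_2$ is matching covered, so $g$ lies in some perfect matching $N_2$ of $G_2$. In $G_2$, $g$ is an edge of $\partial_{G_2}(x)$, and it is the only edge of $N_2$ in $C$. The union $N:=N_1\cup N_2$, with $g$ identified in both, is then a perfect matching of $G$ containing $e$. Solitarity of $e$ in $G$ gives $N=M$, so $N_1=N\cap E(G_1)=M\cap E(G_1)=M_1$. Hence $M_1$ is the unique perfect matching of $G_1$ containing $e$, and $e$ is solitary in $G_1$.

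\textbf{Step 3: the case $e\in C$.} If $e\in C$, then $e$ belongs to both $G_1$ and $G_2$. The argument of Step 2 applies verbatim to each contraction, with $g=e$, so $e$ is solitary in both $G_1$ and $G_2$.

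The only point needing care is the gluing in Step 2: one must check that every perfect matching of $G_1$ containing $e$ extends to a perfect matching of $G$. This is exactly where separating is used, since it makes $G_2$ matching covered and so supplies the completion $N_2$ for any cut edge $g$.
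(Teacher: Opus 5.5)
Your proof is correct and is the routine argument the paper leaves implicit; the paper gives no proof and only says the result follows ``by the definition of solitary edges.'' Your key step is that any perfect matching of $G_1$ containing $e$ extends, through a perfect matching of the matching covered graph $G_2$ containing its cut edge, to a perfect matching of $G$ containing $e$, which forces uniqueness. Step~3 is already covered by Step~2, which never assumed $e\notin C$. Also, the appeal to Proposition~\ref{pro:separating} in Step~1 can be dropped, since the extension argument itself produces a perfect matching of $G$ through $e$ meeting $C$ once.
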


\begin{pro}\label{pro:Me-nonre}
Let $e$ be a solitary edge in a matching covered graph $G$, and $M_e$ be the unique perfect matching of $G$ that contains $e$. Then every edge of $M_e\setminus \{e\}$ is nonremovable in $G$.	
\end{pro}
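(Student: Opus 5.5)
The plan is a direct argument from the definitions, by contradiction. Let $e$ be solitary with unique perfect matching $M_e$, and let $f\in M_e\setminus\{e\}$. Suppose, contrary to the claim, that $f$ is removable, so $G-f$ is matching covered. We will exhibit a second perfect matching containing $e$, contradicting solitarity.

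\textbf{Step 1.} Since $G-f$ is matching covered, the edge $e$, which is still present because $e\neq f$, lies in some perfect matching $M'$ of $G-f$. Here we need $G-f$ to have at least one edge containing $e$; this holds because $e\in E(G-f)$. Matching coveredness of $G-f$ guarantees that every edge of $G-f$, in particular $e$, is in a perfect matching of $G-f$.

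\textbf{Step 2.} The matching $M'$ is also a perfect matching of $G$, since $V(G-f)=V(G)$, and it contains $e$. By uniqueness, $M'=M_e$. But $f\in M_e$ and $f\notin M'$, since $M'\subseteq E(G)\setminus\{f\}$. This is a contradiction, so $f$ is nonremovable.

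With multiple edges present, $f$ is a specific edge, so parallel copies cause no issue. The degenerate case where $G-f$ might fail to be connected is irrelevant, because removability already includes connectedness. There is no real obstacle here. The only point to state carefully is that removing $f$ does not remove $e$, which holds since $f\in M_e\setminus\{e\}$ is distinct from $e$.
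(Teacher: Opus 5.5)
Your proof is correct and is essentially the paper's argument: assuming $f\in M_e\setminus\{e\}$ is removable, matching coveredness of $G-f$ gives a perfect matching $M'$ of $G-f$ containing $e$, and $M'$ is a perfect matching of $G$ different from $M_e$ since $f\notin M'$. This contradicts the solitarity of $e$, exactly as in the paper.
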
	
\begin{proof}
   Suppose that there exists an edge $e'$ of $M_e\setminus \{e\}$ that is removable in $G$. 
   Since $e\in E(G-e')$ and $G-e'$ is matching covered, there exists a perfect matching $M'$ of $G-e'$ containing $e$.
    Note that $M'$ is also a perfect matching in $G$.
    So $M'$ and $M_e$ are two distinct   perfect matchings containing $e$, contradicting the hypothesis that $e$ is solitary in $G$.
\end{proof}

In view of the inheritance of solitary edges between a matching covered graph and its $C$-contractions established in Proposition 7 of \cite{DL2026}, we derive the following corollary concerning removable edges.
\begin{cor}\label{lem:MC}
	Let $G$ be a matching covered graph and let $C$ be a separating cut of $G$.
	Assume that the two $C$-contractions of $G$ are $G_1$ and $G_2$. If every removable edge of $G$ is solitary, then the following statements hold.
	
	(1) For every perfect matching $M_C$ of $G$ such that $|M_C\cap C|\ge 3$, every edge of $M_C$ is nonremovable in $G$.
	
	(2) Let $e\in E(G_1)$, and let $M_e$ be a perfect matching of $G_1$ containing $e$. If $e$ is removable in $G$, then the unique edge in $M_e\cap C$ is solitary in $G_2$.
	
	
\end{cor}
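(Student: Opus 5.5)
The plan is to argue both parts directly from the definition of a solitary edge, combined with Proposition~\ref{pro:separating} and the way perfect matchings of $G$ restrict to, and glue from, perfect matchings of the two $C$-contractions. Throughout, write $C=\partial(X)$, $G_1=G/(\overline{X}\to\overline{x})$ and $G_2=G/(X\to x)$. We use two facts. If $M$ is a perfect matching of $G$ with $|M\cap C|=1$, then $M$ restricts to perfect matchings of both $G_1$ and $G_2$. Conversely, if $M_1$ and $M_2$ are perfect matchings of $G_1$ and $G_2$ that use the same edge of $C$, then $M_1\cup M_2$ is a perfect matching of $G$.

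For (1), suppose to the contrary that some $f\in M_C$ is removable in $G$. By hypothesis $f$ is solitary in $G$, so $M_C$ is the unique perfect matching of $G$ containing $f$. Since $C$ is separating, Proposition~\ref{pro:separating} gives a perfect matching $M$ of $G$ with $f\in M$ and $|M\cap C|=1$. Because $|M_C\cap C|\ge 3$, we have $M\neq M_C$. Thus $f$ lies in two distinct perfect matchings of $G$, a contradiction.

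For (2), let $e\in E(G_1)$ be removable in $G$, so $e$ is solitary in $G$. Let $M_e$ be a perfect matching of $G_1$ containing $e$. Since $\overline{x}$ is covered exactly once, $M_e\cap C=\{f\}$ for a single edge $f$, which may equal $e$ if $e\in C$. Because $C$ is separating, $G_2$ is matching covered, so $f$ lies in at least one perfect matching of $G_2$. Suppose $f$ lies in two distinct perfect matchings $N_1,N_2$ of $G_2$. Each $N_i$ meets $C$ only in $f$, the edge $M_e$ uses at $\overline{x}$, so $M_e\cup N_1$ and $M_e\cup N_2$ are perfect matchings of $G$. They are distinct, since $N_1\setminus\{f\}\neq N_2\setminus\{f\}$, and both contain $e$. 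This contradicts the solitarity of $e$ in $G$. Hence $f$ is solitary in $G_2$.

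The only point needing care is the gluing step in (2): the two matchings must agree on the unique edge of $C$ they share, and in the case $e\in C$ we must check that the argument still yields two distinct matchings containing $e$. Both are routine, so there is no substantial obstacle.
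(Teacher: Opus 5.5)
Your proof is correct and is essentially the paper's argument. The paper gives no written proof: it cites the inheritance of solitary edges under $C$-contractions (Proposition 7 of \cite{DL2026}) and combines it with the hypothesis that removable edges are solitary. You prove that inheritance directly, using Proposition~\ref{pro:separating} for (1) and gluing perfect matchings along the common cut edge for (2), which makes the argument self-contained.
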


\begin{lem}\label{lem:G2re-is-so}
    Let $G$ be a matching covered graph and let $C$ be a separating but not tight cut of $G$. 
    Assume that the two $C$-contractions of $G$ are $G_1$ and $G_2$. 
	If every removable edge of $G$ is solitary and $G_1$ is an odd wheel possibly with multiple spokes, then every removable edge of $G_2$ is solitary. 
\end{lem}
\begin{proof}
	Suppose, to the contrary, that $e$ is a removable but not solitary edge of $G_2$. 
	By Proposition \ref{pro:solitary}, 
	$e$ is not solitary in $G$. 
	If $e$ is removable in $G_1$, then $e$ is removable in $G$ by Lemma \ref{lem:re_also_re}, and hence solitary in $G$, a contradiction. So $e$ is nonremovable in $G_1$ and thus $e\in C$. Let $W_k$ be the underlying odd wheel of $G_1$. 
	Then we claim that $E(G_1)\setminus C$ contains a removable edge $e'$ of $G_1$ such that $e\in M_{e'}$, where $M_{e'}$ is a perfect matching of $G$ containing $e'$. 
	For $k\geq 5$, the claim follows immediately from the facts that every spoke of $G_1$ is removable and that every edge of $G_1$ lies in a perfect matching containing a spoke.
	
	For  $k=3$, every edge of $W_k$ belongs to a removable doubleton. 
	Since $e$ is nonremovable in $G_1$, $e$ is not a multiple edge in $G_1$. 
	If $G_1$ contains a perfect matching containing $e$ and a multiple edge, then that multiple edge is the desired edge $e'$. 
	Otherwise, $e$ belongs to a removable doubleton $R$ of $G_1$. 
	Obviously, $R$ is also a perfect matching of $G_1$. 
	By Lemma \ref{thm:removable doubleton}, $R\setminus \{e\}$ contains a removable edge of $G$, which serves as the required edge $e'$.

	Since every removable edge of $G$ is solitary, $e'$ is solitary in $G$. 
	Recall that $e\in M_{e'}\cap C$. By Corollary \ref{lem:MC} (2), $e$ is solitary in $G_2$, contradicting our assumption 
	Therefore, every removable edge of $G_2$ is solitary.
\end{proof}

    \begin{lem}{\rm\cite{Lucchesi2024}}\label{lem:S-non}
	If $G$ is a solid brick with $|V(G)|\ge 6$, then every vertex of $G$ is incident with at most two nonremovable edges of $G$.
\end{lem}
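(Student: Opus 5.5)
Lemma~\ref{lem:S-non} is a standard fact cited from \cite{Lucchesi2024}. Here is how I would prove it.

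Fix a vertex $v$ of the solid brick $G$ with $|V(G)|\ge 6$, and suppose, for a contradiction, that $v$ is incident with three nonremovable edges $e_1,e_2,e_3$. The tool is the standard description of nonremovable edges in a brick. If $e=ab$ is nonremovable in a brick $G$, then $G-e$ is not matching covered. By Tutte's Theorem~\ref{thm:Tutte}, applied as in Proposition~\ref{pro:separating}, there is a barrier $B$ of $G-e$ such that $a$ and $b$ lie in distinct odd components of $(G-e)-B$. Since $G$ is bicritical, $|B|\ge 2$ is forced, and at least one of those odd components is nontrivial, since otherwise we get a small barrier contradicting 3-connectivity or bicriticality. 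Writing $X$ for the vertex set of a nontrivial odd component, the cut $\partial(X)$ is a nontrivial separating cut of $G$. Because $G$ is solid, this cut is tight. It is therefore a nontrivial tight cut in a brick, which is impossible, unless the structure degenerates.

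So the argument has to be the finer one from the theory of solid bricks. In a solid brick, the nonremovable edges are exactly those lying in a removable doubleton or those $e$ with $G-e$ bipartite-ish. The known result is that for a solid brick $G\ne K_4$, the graph $G-e$ for nonremovable $e$ has a barrier $B$ whose odd components are all singletons except possibly ones giving tight cuts. Concretely, I would show: if $e=vu$ is nonremovable, then there is a maximal barrier $B_e$ of $G-e$, and every component of $G-e-B_e$ is trivial except the ones containing $u$ or $v$. Solidity then forces the nontrivial side to yield a tight cut, whose contraction yields a smaller solid brick.

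With this characterization in hand, take the three barriers $B_1,B_2,B_3$ associated with $e_1,e_2,e_3$ at $v$. I would use the fact that $v\notin B_i$ and that $v$ is isolated in $G-e_i-B_i$, or lies in its own component on one side. This gives $N(v)\setminus\{u_i\}\subseteq B_i$. Counting then bounds $|B_i|$ from below by $d(v)-1$. Next, I would combine two of these barriers via the usual uncrossing (submodularity of the deficiency $o(G-B)-|B|$). This produces a barrier of $G$ itself of size at least $2$, or a 2-vertex cut. Either contradicts bicriticality or 3-connectivity, as long as $|V(G)|\ge 6$ rules out the $K_4$ exception where every edge is nonremovable.

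The main obstacle is the uncrossing step: making sure that the union or intersection of $B_i$ and $B_j$ is genuinely a barrier with the required parity in $G$, and that the case of removable doubletons ($K_4$-like local structure) is excluded by $|V(G)|\ge 6$. Failing a clean uncrossing, I would fall back on citing the structural theorem of Carvalho, Lucchesi and Murty on solid bricks, which the paper treats as known.
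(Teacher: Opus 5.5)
The paper gives no proof of this lemma; it only cites Lucchesi--Murty. Your fallback of citing the structural theorem therefore coincides with the paper. The argument you sketch before that fallback, however, fails at both of its key steps.

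\textbf{The local structure is misidentified.} You claim that one of the odd components of $(G-e)-B$ containing an end of $e$ is nontrivial and bounded by a separating cut. Combined with solidity, this would show that solid bricks have no nonremovable edges, which is false. Take $W_5$ with hub $h$ and rim $w_1\cdots w_5$. The maximal barrier $\{h,w_3,w_5\}$ of $W_5-w_1w_2$ leaves only isolated vertices. The barrier $\{h,w_3\}$ does have the nontrivial component $\{w_1,w_4,w_5\}$, but its cut is nontrivial and hence not separating in this solid brick.

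Your ``concrete'' replacement in the second paragraph is backwards. What solidity yields, via the separating cuts associated with a nonremovable edge and a suitably chosen (e.g.\ maximal) barrier, is that the components containing the ends of $e$ are trivial. It says nothing of the kind about the other components. Also, a brick has no nontrivial tight cut to contract. In your third paragraph you then use that $v$ is isolated in $(G-e_i)-B_i$, which contradicts what you just asserted. So the input $N(v)\setminus\{u_i\}\subseteq B_i$ is never justified.

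\textbf{The combination step is missing, and as planned it cannot succeed.} The deficiency $o(G-X)-|X|$ is not submodular in general; a triangle already violates it. Moreover, $B_1$ and $B_2$ are barriers of different graphs, $G-e_1$ and $G-e_2$.

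More decisively, no argument that combines only two of the barriers can give a contradiction. In $W_5$ take $v=w_1$, $e_1=w_1w_2$, $e_2=w_1w_5$, $B_1=\{h,w_3,w_5\}$ and $B_2=\{h,w_2,w_4\}$. These have every property you list: $v,u_i\notin B_i$, both are isolated in $(G-e_i)-B_i$, and $N(v)\setminus\{u_i\}\subseteq B_i$. Yet $W_5$ is a solid brick on six vertices, and its rim vertices genuinely lie on two nonremovable edges. So the contradiction must use all three nonremovable edges at once, and $|V(G)|\ge 6$ must enter quantitatively, not merely to ``rule out $K_4$''.

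As an illustration of the kind of three-way count required, suppose each $I_i:=V(G)\setminus B_i$ were a stable set of $G-e_i$ of size $|V(G)|/2$, as happens in odd wheels.
\begin{itemize}
\item Then $u_j\notin I_i$ for $j\neq i$.
\item Every vertex of $S:=(I_i\cap I_j)\setminus\{v\}$ has all its neighbours in $B_i\cap B_j$.
\item Also $|B_i\cap B_j|=|I_i\cap I_j|=|S|+1$.
\item If $S\neq\emptyset$, then $G-(B_i\cap B_j)$ has at least $|B_i\cap B_j|-1$ isolated vertices, contradicting bicriticality. Hence $I_i\cap I_j=\{v\}$.
\item Inclusion--exclusion then gives $3|V(G)|/2-2\le |V(G)|$, i.e.\ $|V(G)|\le 4$.
\end{itemize}
Nothing in your sketch supplies this structure or any substitute for it.
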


\begin{lem}{\rm\cite{DL2026}}\label{lem:S-wheel}
Let $G$ be a simple solid brick. If at most one vertex of $G$ is not incident with a solitary edge, then $G$ is an odd wheel.
\end{lem}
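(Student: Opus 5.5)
The plan is to reduce everything to the structure of the nonremovable edges, via Lemma~\ref{lem:S-non} and Proposition~\ref{pro:Me-nonre}. If $|V(G)|=4$, then $G=K_4=W_3$ and there is nothing to prove, so assume $|V(G)|\ge 6$. Let $R$ be the set of nonremovable edges of $G$. By Lemma~\ref{lem:S-non}, every vertex is incident with at most two edges of $R$. Hence the spanning subgraph $G[R]$ has maximum degree at most $2$, and is a disjoint union of paths and cycles (some possibly trivial).

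The next step is to link $R$ to solitary edges. For each solitary edge $e$ with unique perfect matching $M_e$, Proposition~\ref{pro:Me-nonre} gives $M_e\setminus\{e\}\subseteq R$. So $R$ contains a perfect matching of $G-V(e)$. By hypothesis, all vertices but at most one are ends of solitary edges, so there are many such $e$. I would exploit this as follows. For every component $Q$ of $G[R]$ and every solitary edge $e$, the set $V(Q)\setminus V(e)$ must be perfectly matchable inside the path or cycle $Q-V(e)$. This forces strong parity conditions:
\begin{itemize}
\item an odd component must meet $V(e)$ in exactly one vertex;
\item an even path component must meet $V(e)$ in zero or two vertices, at suitable positions.
\end{itemize}
Since every solitary edge has only two ends, there are at most two odd components. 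Varying $e$ over all solitary edges, I would aim to show that $G[R]$ consists of a single odd cycle $C$ together with one isolated vertex $h$. Every solitary edge then joins $h$ to a vertex of $C$. Moreover, the solitary edges at $h$ reach all vertices of $C$, except possibly the one vertex allowed to miss a solitary edge.

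Having this structure, I would show that $G$ is exactly $W_{|C|}$ with hub $h$ and rim $C$. The remaining edges are chords of $C$, and possibly missing spokes. Two points need care here.
\begin{itemize}
\item \textbf{Chords.} A chord $f=xy$ of $C$ splits $C$ into two paths. For a suitable spoke $hz$, the chord $f$ together with the rim edges yields a second perfect matching containing $hz$, contradicting solitarity. When that parity fails, I would instead use that $f$ is removable (since $f\notin R$), and then derive a contradiction with Corollary~\ref{lem:MC} or with the fact that a solitary spoke's matching is forced to be a rim matching.
\item \textbf{Missing spokes.} If some vertex of $C$ is not adjacent to $h$, then it is the exceptional vertex. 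I would check that its degree is then at least $3$ only through chords, which are already excluded; so it has degree $2$, contradicting 3-connectivity.
\end{itemize}

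The main obstacle is the combinatorial step pinning down $G[R]$ as one odd cycle plus a hub. In particular, I must rule out $G[R]$ being a union of several even paths or cycles that the solitary edges stitch together. I would first try a counting argument. Each solitary edge uses exactly two vertices, the components of $G[R]$ must be matchable after deleting them, and every vertex but one lies on a solitary edge. If counting alone is insufficient, I would invoke solidity directly: two odd components, or a separation of $R$ into even blocks, should produce a nontrivial separating non-tight cut (via Proposition~\ref{pro:separating}), which is impossible in a solid brick.
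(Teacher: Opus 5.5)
The paper does not prove this lemma; it imports it from \cite{DL2026}. I am therefore judging your argument on its own terms, and it has a genuine gap.

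Your reduction is sound. For $|V(G)|\ge 6$, Lemma~\ref{lem:S-non} gives maximum degree at most $2$ in $G[R]$, and Proposition~\ref{pro:Me-nonre} gives $M_e\setminus\{e\}\subseteq R$. So each odd component of $G[R]$ meets $V(e)$ in exactly one vertex, and $G[R]$ has $0$ or $2$ odd components.

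The gap is the next step. The claim that $G[R]$ is an odd cycle plus an isolated vertex is the entire content of the lemma, and you only announce it. Push your counting as far as it goes. With two odd components, every solitary edge joins them, so there are no even components. Each path component then has at most three vertices, since even-position vertices of an odd path are never ends of solitary edges. That still leaves two configurations: (a) two disjoint odd cycles spanning $G$, and (b) an odd cycle $C$ plus a path $abc$ whose middle vertex is the exceptional one. With no odd component you also get cases such as $G[R]$ a Hamiltonian even cycle all of whose edges are solitary, or a union of even paths. All of these satisfy every parity and counting constraint you list, so counting cannot finish.

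Your fallback, that solidity ``should produce a nontrivial separating non-tight cut'', is exactly what must be proved, and it is not automatic. In (b) with $ac\notin E(G)$, the cut $\partial(\{a,b,c\})$ is not separating: $b$ has a neighbor $w\in V(C)$, and every perfect matching containing $bw$ uses three edges of that cut. One needs an exchange argument instead. A second perfect matching $\{au,bw,cv\}\cup(\text{rim edges})$ through a solitary edge $au$ ultimately forces $d(a)\le 2$ or $d(c)\le 2$. In (a) you must first control the chords of the two cycles before Proposition~\ref{pro:separating} can be applied to $\partial(V(C_1))$. The case with no odd component is not addressed at all.

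Your chord step also needs repair. The exchange trick rules out every chord except one whose odd side is the exceptional vertex $z$ alone, that is, a chord $xy$ with $x,z,y$ consecutive on $C$. For that chord your fallback to Corollary~\ref{lem:MC} is unavailable, because that corollary assumes every removable edge of $G$ is solitary, which is not a hypothesis here. It is easily fixed. The same trick shows no chord is incident with $z$, so $N(z)=\{x,y,h\}$. Then in $G-hz$ the set $\{x,y\}$ is a barrier containing the edge $xy$, so $hz$ is nonremovable, contradicting $R=E(C)$.

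So your endgame (no chords, no missing spokes) works once the structure of $G[R]$ is known. The proof as proposed is missing its central step.
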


By Lemma \ref{lem:S-non}, every vertex of a solid brick $G$ with $|V(G)|\ge 6$ is incident with at least one removable edge of $G$. Combining this with Lemma \ref{lem:S-wheel}, we derive the following corollary.

\begin{cor}\label{lem:Solid-con}
	Let $G$ be a simple solid brick. If $G$ contains a vertex $v_0$ such that all removable edges  in $E(G)\setminus \partial(v_0)$  are solitary, then $G$ is an odd wheel.
\end{cor}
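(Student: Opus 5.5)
The statement to prove is Corollary \ref{lem:Solid-con}. The plan is to reduce it to Lemma \ref{lem:S-wheel}: we show that every vertex of $G$ other than $v_0$ is incident with a solitary edge. Lemma \ref{lem:S-wheel} then gives directly that $G$ is an odd wheel. First we dispose of the small cases. A simple brick on four vertices is $K_4 = W_3$, which is an odd wheel, so we are done there. Since bricks have an even number of vertices, we may assume from now on that $|V(G)|\ge 6$.

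Now fix a vertex $u\neq v_0$. By Lemma \ref{lem:S-non}, since $G$ is a solid brick with $|V(G)|\ge 6$, at most two edges of $\partial(u)$ are nonremovable. Because $G$ is a brick, $d(u)\ge 3$, so $\partial(u)$ contains at least one removable edge. We want such a removable edge $e$ to avoid $v_0$, because then $e\in E(G)\setminus\partial(v_0)$ and the hypothesis makes $e$ solitary.

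Since $G$ is simple, at most one edge of $\partial(u)$ joins $u$ to $v_0$. If $d(u)\ge 4$, then $\partial(u)$ has at least two removable edges, so at least one of them is not the edge $uv_0$. If $d(u)=3$ and $u$ is not adjacent to $v_0$, every edge at $u$ avoids $v_0$ and we are done.

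The remaining case is the main obstacle: $d(u)=3$, $u$ is adjacent to $v_0$, and $uv_0$ is the unique removable edge at $u$, so the other two edges $uw_1,uw_2$ are nonremovable. Lemma \ref{lem:S-non} alone does not exclude this. Here I would try to show directly that one of $uw_1,uw_2$ is solitary. Since $uw_1$ is nonremovable, some edge $f$ depends on it, meaning every perfect matching containing $f$ contains $uw_1$. I would take a perfect matching $M$ containing $uw_1$ and use the removable edges not incident with $v_0$ (all solitary by hypothesis) together with Proposition \ref{pro:Me-nonre}. For a solitary removable edge $g$ whose unique matching $M_g$ contains $uw_1$, Proposition \ref{pro:Me-nonre} confirms that $uw_1$ is nonremovable, which is consistent with this case. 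The aim is to force that every perfect matching through $uw_1$ equals such an $M_g$, which would make $uw_1$ solitary.

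If this direct argument stalls, I would fall back on the fact that the lemma only needs all vertices \emph{but one} to be covered by solitary edges. It suffices to show that at most one exceptional vertex $u$ of this kind can exist besides $v_0$; more precisely, that all such $u$ must coincide with the single vertex allowed to be uncovered. Alternatively, one could check that the proof of Lemma \ref{lem:S-wheel} in \cite{DL2026} tolerates the slack at neighbours of $v_0$. Once every vertex other than $v_0$ is incident with a solitary edge, Lemma \ref{lem:S-wheel} applies with $v_0$ as the single uncovered vertex, and $G$ is an odd wheel.
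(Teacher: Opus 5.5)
\textbf{Verdict: there is a genuine gap.} Your reduction to Lemma \ref{lem:S-wheel} via Lemma \ref{lem:S-non} is the paper's reduction; the paper states it in one sentence. Your small cases and your analysis for $d(u)\ge 4$, or $u\not\sim v_0$, are correct. You also correctly isolate the case the one-line derivation passes over: $u\neq v_0$, $d(u)=3$, $uv_0\in E(G)$, and $uv_0$ is the only removable edge at $u$.

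However, you do not close this case, and both routes you sketch cannot work. Take $G=W_k$ with $k\ge 5$ and $v_0$ the hub. The hypothesis holds vacuously, since the only removable edges are spokes. Every rim vertex is such an exceptional $u$, and each rim edge lies in $(k-1)/2\ge 2$ perfect matchings, so it is not solitary. Hence ``one of $uw_1,uw_2$ is solitary'' is false in general, and so is ``at most one exceptional vertex.'' The second route would not suffice even if it held, because $v_0$ may itself be uncovered, leaving two uncovered vertices. Appealing to the internals of the proof of Lemma \ref{lem:S-wheel} is not an argument.

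\textbf{The missing idea.} The edge to prove solitary is $uv_0$ itself, using Proposition \ref{pro:Me-nonre} in the opposite direction from the one you tried. Let $N$ be the set of nonremovable edges; by Lemma \ref{lem:S-non}, $N$ induces a disjoint union of paths and cycles. Let $U$ be the set of exceptional vertices. Each $x\in U$ has exactly two edges of $N$, both avoiding $v_0$.
\begin{enumerate}
\item Let $M$ be a perfect matching with $uv_0\in M$, and suppose some $g\in M\setminus\{uv_0\}$ is removable. Then $g$ avoids $v_0$, so $g$ is solitary with $M_g=M$. Proposition \ref{pro:Me-nonre} then makes $uv_0$ nonremovable, a contradiction. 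Hence $M\setminus\{uv_0\}\subseteq N$.
\item If $uv_0$ is not solitary, two such matchings differ on an even cycle $Q\subseteq N$. Since $N$ has maximum degree at most $2$, $Q$ is a whole component of $N$, and it avoids $u$ and $v_0$.
\item For any solitary edge $g$, Proposition \ref{pro:Me-nonre} gives $M_g\setminus\{g\}\subseteq N$. If $g$ has no end on $Q$, swapping $M_g$ along $Q$ gives a second perfect matching through $g$. If $g$ has exactly one end on $Q$, the remaining odd set $V(Q)$ minus that end must be matched by edges of $Q$, which is impossible. So every solitary edge has both ends on $Q$. In particular, every vertex outside $U\cup\{v_0\}$ lies on $Q$.
\item The $N$-component of $u$ is disjoint from $Q$, so it lies in $U\cup\{v_0\}$. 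It cannot reach $v_0$, because the $N$-edges at vertices of $U$ avoid $v_0$. It is therefore a $2$-regular component inside $U$, i.e.\ a cycle, each of whose vertices has $v_0$ as its only neighbour off the cycle.
\item Then $v_0$ separates this cycle from $Q$, contradicting $3$-connectivity.
\end{enumerate}
So $uv_0$ is solitary. Every vertex other than $v_0$ is then incident with a solitary edge, and Lemma \ref{lem:S-wheel} applies.
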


\begin{note}
\label{note:multiple}
		If the graph $G$ in Corollary~\ref{lem:Solid-con} is allowed to be a solid brick with multiple edges, then the hypothesis still implies that the underlying simple graph of $G$ is an odd wheel. In particular, if $|V(G)|=4$, then all multiple edges of $G$ are adjacent to each other; and if $|V(G)|\geq 6$, then every multiple edge of $G$ must be a spoke.
\end{note}

\section{Proof of Theorem \ref{main} and a Family of Graphs}

\subsection{Proof of Theorem \ref{main}}

Let $G$ be a simple nonsolid brick in which every removable edge is solitary.
By Lemma \ref{lem:Robust-solid}, $G$ has a robust cut $\partial(X)$ such that there exist subsets $X'$ of $X$ and $X''$ of $\overline{X}$ such that $G/(\overline{X'}\rightarrow \overline{x'})$ is a solid brick $G_1$, $G/(\overline{X''}\rightarrow \overline{x''})$ is a brick $G_2$ and the graph $H$, obtained from $G$ by contracting $X'$ and $X''$ to single vertices $x'$ and $x''$, respectively, is bipartite and matching covered, where $x'$ and $x''$ lie in different color classes of $H$.
By Lemma \ref{lem:re_also_re}, every removable edge of $E(G_1)\setminus \partial(\overline{x'})$ is also removable in $G$.
Since every removable edge of $G$ is solitary, every removable edge of $E(G_1)\setminus \partial(\overline{x'})$ is solitary in $G$, and thus solitary in $G_1$ by Proposition \ref{pro:solitary}. 
By Corollary \ref{lem:Solid-con} and Note \ref{note:multiple}, the underlying simple graph of $G_1$ is an odd wheel, with the multiplicity of edges constrained as in Note \ref{note:multiple}.\\
[7pt]
\noindent
\textbf{Claim 1.} $V(H)=\{x',x''\}$.

\begin{proof}
	Suppose, to the contrary, that $|V(H)|\ge 4$. 
	Let $A$ and $B$ be the two color classes of $H$.
	Without loss of generality, assume that $x'\in A$ and $x''\in B$. 
	Since $H$ and $G_2$ are matching covered, $H\odot G_2$ is matching covered by Proposition \ref{pro:MC_IS_MC}. Since $G/X':=H\odot G_2$ and $G_1:=G/\overline{X'}$,  $\partial(X')$ is a separating cut of $G$. 
    Since $G$ is a brick, every nontrivial cut of $G$ is not tight. 
    Since $\partial(X')$ is a nontrivial cut, it is not tight. By Lemma \ref{lem:G2re-is-so}, every removable edge of $G/X'$ is solitary.
	Since $|V(H)|\ge4$ and each vertex of $H$ has at least two distinct neighbors, $\partial_H(x')\setminus E_H[x',x'']\ne \emptyset$.
	Let $e\in \partial_H(x')\setminus E_H[x',x'']$.
	Then $e$ is a removable edge of $H$ by Lemma \ref{lem:2-bi-removable}.
	Since $\partial_{G/X'}(X'')$ is a separating cut of $G/X'$ and $e\notin E(G_2)$, $e$ is removable in $G/X'$ by Lemma \ref{lem:re_also_re}.
    By Lemma \ref{lem:H-not-solitary}, $e$ is not solitary in $H$, and thus not solitary in $G/X'$ by Proposition  \ref{pro:solitary}.
	This contradicts the fact that every removable edge of $G/X'$ is solitary.
	Therefore, the claim holds.
\end{proof}

By Claim 1, we have $G:=(G_1(\overline{x'})\odot G_2(\overline{x''}))_{\theta}$ for some bijection $\theta$ between $\partial_{G_1}(\overline{x'})$ and $\partial_{G_2}(\overline{x''})$. 
Then $G_2:=G/X'$. According to the discussion in Claim 1, every removable edge of $G_2$ is solitary. We complete the proof of Theorem \ref{main}. 
\hfill\qedsymbol

\subsection{A Family of Graphs}
By Theorem \ref{main}, it is natural to ask whether every brick in which every removable edge is solitary may be obtained by splicing a $K_4$ (up to multiple edges) with a smaller brick in which every removable edge is solitary. The answer is negative in general. We construct an infinite family of bricks in which every removable edge is solitary, and none of them can be obtained by splicing a $K_4$ (up to multiple edges) with a smaller brick in which every removable edge is solitary.

Let \( W'_{\ell} \) and \( W_k \) be odd wheels with \( \ell, k \geq 5 \). Let \( h'_{\ell} \) and \( h_k \) be the hub vertices of \( W'_{\ell} \) and \( W_k \), respectively, and let \( w'_{1}, \dots, w'_{\ell} \) and \( w_{1}, \dots, w_{k} \) be their respective rim vertices. 
    Let $L := W_\ell' - w'_\ell$ and $R := W_k - w_k$. 
	Then $L$ and $R$ are called the left and right terminal gadgets, respectively.
	
	For each integer $t\ge 0$, define a \emph{layer gadget} $B_t$ with vertex set 
	$V(B_t) = \{v_1^t, v_2^t, v_3^t, v_4^t\}$ and edge set $E(B_t) = \{v_1^tv_2^t,\, v_3^tv_4^t\}$.
	For each $t\ge 1$, we further introduce two additional vertices $\{w_2^t, w_3^t\}$, which we call \emph{connector vertices}.

\paragraph{Construction of the family $\mathcal{G}$.}	For each integer $i\ge0$, we recursively define the graph $G_i$ as follows. The vertex set of $G_i$ is
	\[
	V(G_i)
	= V(L) \cup V(R)
	\cup \left( \bigcup_{t=0}^{i} V(B_t) \right)
	\cup \left( \bigcup_{t=1}^{i} \{w_2^t, w_3^t\} \right).
	\]
	
	The edge set of $G_i$ comprises the following six families:
	\begin{enumerate}
		\item All edges of the terminal gadgets $L$ and $R$;
		
		\item All edges of the layer gadgets $B_t$, i.e. the edges $\{v_1^tv_2^t,\, v_3^tv_4^t: 0\le t\le i\}$;
		
		\item The edges $\{w_3^tv_j^t, w_3^{t+1}v_j^t, w_3^tw_3^{t+1}: 0\le t\le i, 1\leq j\leq 4\}$, 
		where $w_3^0=h_k$ and $w_3^{i+1}=h_\ell'$;
		
		\item The edges $\{w_2^tw_3^t: 1\le t\le i\}$;
		
		\item The upper boundary path $w_1'v_1^iv_1^{i-1}\cdots v_1^0w_1$;
		
		\item The lower boundary path $w_{\ell-1}'v_4^iw_2^iv_4^{i-1}w_2^{i-1}\cdots w_2^1v_4^0w_{k-1}$ for $i\ge1$.
	\end{enumerate}

Equivalently, $G_i$ may be viewed as a chain of $(i+1)$ layer gadgets
$B_i,B_{i-1},\ldots,B_0$
placed between the two fixed terminal gadgets $L$ and $R$. The vertices $w_2^t$ and $w_3^t$ serve as connector vertices joining two consecutive layers. Consequently,
$\mathcal{G}=\{G_i\mid i\ge 0\}$
forms an infinite family of connected graphs. The graphs $G_1$ and $G_2$ are depicted in Figure \ref{fig:Gi-G1} for illustration only.

\begin{figure}[!h]
	\centering
	\begin{minipage}[t]{0.3\textwidth}
	\centering
	\includegraphics[totalheight=3cm]{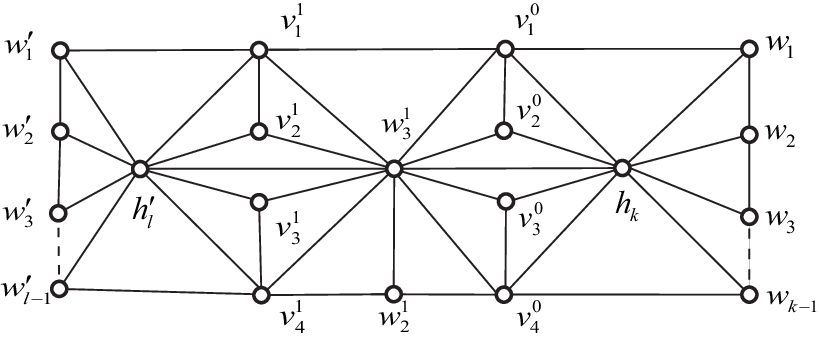}
	\end{minipage}

\centering	
	\begin{minipage}[t]{0.45\textwidth}
		\centering
		\includegraphics[totalheight=3cm]{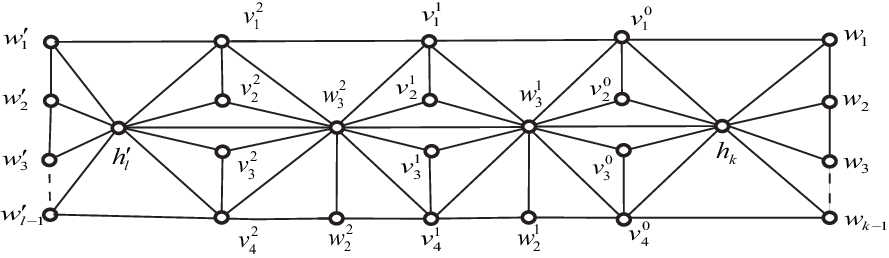}
	\end{minipage}
	\caption{The graphs $G_1$ and $G_2$.}
	\label{fig:Gi-G1}
\end{figure}

\begin{thm}\label{main1}
	Every graph $G$ in $\mathcal{G}$ is a  nonsolid brick in which every removable edge of $G$ is solitary. Furthermore, $G$ cannot be obtained by splicing a brick and $K_4$, up to multiple edges.
\end{thm}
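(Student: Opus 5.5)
I would prove Theorem~\ref{main1} by induction on $i$, viewing $G_i$ as an iterated splicing and exploiting the cut structure around each layer. The first step is to exhibit, for each $i$, a family of nontrivial separating cuts. The natural choices are the cuts $\partial(V(L)\cup\{h'_\ell\})$ and $\partial(V(R)\cup\{h_k\})$. Contracting the complement of $V(L)\cup\{h'_\ell\}$ returns exactly $W'_\ell$, possibly with a multiple spoke, and symmetrically on the right. The other contraction should be isomorphic to $G_{i-1}$, or to a graph of the same type with one layer absorbed, up to multiple edges.

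\textbf{Brick property.} I would apply Theorem~\ref{thm:2-cut} to the cut $\partial(V(L)\cup\{h'_\ell\})$. Both contractions are bricks: an odd wheel on one side, and the induction hypothesis on the other. For the base case $G_0$, the graph is a splicing of two odd wheels at a rim vertex, and I would verify this directly. It then remains to check that no pair of vertices, one on each shore, covers the cut edges. The cut contains edges leaving $w'_1$, $w'_{\ell-1}$ and $h'_\ell$ towards distinct vertices, so I expect this check to be routine.

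\textbf{Nonsolidity.} The same cut is separating: both contractions are matching covered, and by Proposition~\ref{pro:separating} it suffices to exhibit one perfect matching meeting it in three edges. Such a matching uses the three edges at $w'_1$, $w'_{\ell-1}$ and $h'_\ell$, completed by rim matchings of $L$. Hence the cut is separating but not tight, and $G$ is nonsolid.

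\textbf{Removable edges are solitary.} This is the main obstacle. My plan is to determine the removable edges explicitly. I expect them to be exactly the spokes of the two terminal wheels together with certain edges $w_3^tv_j^t$. I would show every other edge is nonremovable by exhibiting, for each such edge $f$, an edge $g$ that forces $f$: every perfect matching containing $g$ also contains $f$. This can be done via a barrier of $G-f$ found by parity around the layers; for example, $\{w_3^t,w_3^{t+1}\}$ together with boundary vertices typically forms such a barrier. For the candidate removable edges, I would show solitarity by propagation. Choosing a spoke $h_kw_j$ forces a unique matching of the rim path of $R-w_j$. The parity of $k$ and $\ell$ then forces, layer by layer, the edges $v_1^tv_2^t$ and $v_3^tv_4^t$ and the connector edges $w_2^tw_3^t$. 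Corollary~\ref{lem:MC}(1) and Lemma~\ref{lem:re_also_re} should cut down the case analysis: any edge lying in a perfect matching meeting one of the separating cuts in three or more edges cannot be a solitary removable edge. I expect this edge-by-edge verification to be the longest part.

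\textbf{Not a splicing with $K_4$.} Suppose $G=(K(u)\odot G'(v))_\theta$ with $K$ a $K_4$ up to multiple edges. Then $G$ has a 3-cut $\partial(Y)$ with $|Y|=3$ and $G[Y]$ a triangle, each vertex of $Y$ sending cut edges out. I would check that the triangles of $G_i$ are only those through the hubs $h_k$, $h'_\ell$ and $w_3^t$. Since each of these hubs has degree at least $5$, every triangle through it yields $|\partial(Y)|\ge 4$ edges. This rules out any such $Y$.
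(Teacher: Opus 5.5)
\textbf{The gap is in your last step.} You argue that a splicing $G=(K(u)\odot G'(v))_\theta$ with $K\cong K_4$ produces a $3$-cut $\partial(Y)$ around a triangle, and you rule this out by counting edges. The statement, however, excludes $K_4$ \emph{up to multiple edges}. The splicing vertex $u$ may carry parallel edges, so $|\partial_G(Y)|=d_K(u)$ can be arbitrarily large. Your degree count therefore only settles the trivial simple-$K_4$ case.

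The configuration you are trying to exclude is in fact present. For every triangle $T$ of $G_i$ (all of which contain a hub, as you correctly observe), the contraction $G_i/\overline{V(T)}$ is a $K_4$ with parallel edges. For example, when $T=\{h'_\ell,w'_1,w'_2\}$, the contracted vertex sends $\ell+2$ parallel edges to $h'_\ell$ and one edge each to $w'_1$ and $w'_2$. So the real task is to show that the other contraction $G_i/(V(T)\to t)$ is never a brick.

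The paper does this by exhibiting a $2$-vertex cut in each case:
\begin{itemize}
\item For $T=\{w_3^n,v_1^n,v_2^n\}$ or $T=\{w_3^n,v_1^{n-1},v_2^{n-1}\}$, the pair $\{t,w_2^n\}$ separates the layers above from those below.
\item For every other triangle, some neighbour of $t$ keeps at most two distinct neighbours. For instance, when $T=\{h'_\ell,w'_1,w'_2\}$, the vertex $w'_3$ is adjacent only to $t$ and $w'_4$.
\end{itemize}
Without an argument of this kind, the second sentence of the theorem remains unproved.

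\textbf{Two smaller points.}

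First, Corollary~\ref{lem:MC}(1) assumes that every removable edge of $G$ is solitary, which is exactly what you are proving. Using it to discard edges is therefore circular. What holds unconditionally (via Proposition~\ref{pro:separating}) is only that an edge in a perfect matching meeting a separating cut at least three times is non-solitary. Such edges still need a direct nonremovability proof.

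You also do not need to pin down the removable set exactly. Your guess may in any case miss edges such as $h'_\ell v_1^i$, which is removable by Lemma~\ref{lem:re_also_re} because its image in $G/V(L)$ is a parallel edge. The paper's route is cleaner: identify the solitary set $S_i$, then check that every other edge lies in the unique perfect matching $M_e$ of some $e\in S_i$, so Proposition~\ref{pro:Me-nonre} applies. This is your ``forcing edge $g$'' idea with $g$ chosen solitary.

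Second, $G_i/V(L)$ is not $G_{i-1}$ up to multiple edges. Writing $y$ for the contracted vertex, it is $G_{i-1}$ with $\ell$ replaced by $7$ (hub $w_3^i$, rim path $v_1^iv_2^iyv_3^iv_4^iw_2^i$), plus the new edges $yv_1^i$ and $yv_4^i$, each with multiplicity two. Likewise, $G_0/V(L)$ is $W_{k+4}$ plus two doubled chords, not an odd wheel. Your brick induction still works if you induct over all odd $\ell,k\ge5$ and note that adding edges preserves being $3$-connected and bicritical. You should state this explicitly.
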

\begin{proof}
	Let $G:=G_i$ be a graph in $\mathcal{G}$, for $i\ge 0$. Note that the graph $G_i$ is isomorphic to a graph obtained by sequentially splicing another wheel graph to the initial wheel graph $W_\ell'$,  where each splicing operation preserves 3-connectivity, for $\ell\ge 5$. Therefore, $G_i$ is a brick by Theorem \ref{thm:2-cut}.
	By the construction of
	$G_i$, $G_i$ has $i+2$ vertices with degree greater than 5. Let $U=\{x\in V(G_i): d_{G_i}(x)>5\}$.
	We now prove the following two claims.
	\\[7pt]\noindent
	\textbf{Claim A.} Every removable edge of $G_i$ is solitary.
	
	\begin{proof}	
	Denote by $S_i$  the union of the edge sets $\partial_{G_i}(x) \setminus (\partial_{G_i}(v_2^n) \cup \partial_{G_i}(v_3^n))$ for all $x\in U$ and all integers $n$ satisfying $0\le n\le i$.
	By the construction of $G_i$ and the definition of solitary edges, $S_i$ is the set of all solitary edges of $G_i$. 
	Let $\mathcal{M}_i$ be the union of
	those perfect matchings of $G_i$, each of
	which contains at least one edge of $S_i$.
	It can be checked that $\mathcal{M}_i\setminus S_i=E(G_i)\setminus S_i$. By Proposition \ref{pro:Me-nonre}, every edge of $E(G_i)\setminus S_i$ is nonremovable in $G_i$.
	Therefore, every removable edge of $G_i$ belongs to the edge set $S_i$. So the result follows. 		
	\end{proof}
	
	\noindent
	\textbf{Claim B.} $G_i$ cannot be obtained by splicing a brick and $K_4$ (up to multiple edges).
	
	\begin{proof}
		Note that for each vertex $x\in U$, any two adjacent vertices in $N_{G_i}(x)$ together with $x$ form a triangle in $G_i$, and the set of all such triangles is exactly the set of all triangles in $G_i$, denoted by $\mathcal{T}_i$.
		If $G_i$ can be obtained by splicing a brick and a $K_4$ (up to multiple edges), then there exists a triangle $T$ in $G_i$ such that $G_i/V(T)$ is a brick. However, we will show that, for any $T\in \mathcal{T}_i$, $G_i/ (V(T) \to t)$ is not 3-connected, and thus not a brick.
		If $T\in \{w_3^nv_2^{n}v_1^{n}w_3^n,w_3^nv_2^{n-1}v_1^{n-1}w_3^n\}$, then $G_i/V(T)$ has a 2-vertex cut $\{t,w_2^n\}$, for $ 1\le n \le i$. If $T\in \mathcal{T}_i\setminus \{w_3^nv_2^{n}v_1^{n}w_3^n,w_3^nv_2^{n-1}v_1^{n-1}w_3^n\}$, then $G_i/V(T)$ contains a vertex $u\in N_{G_i/V(T)}(t)$ such that $|N_{G_i/V(T)}(u)|=2$, that is, $N_{G_i/V(T)}(u)$ is a 2-vertex cut in $G_i/V(T)$. Therefore, the  claim holds.
	\end{proof}
	
	By Claims A and B, Theorem \ref{main1} follows.
\end{proof}

\subsection{Splicings of Two Odd Wheels}

By Theorem \ref{main}, every simple nonsolid brick in which every removable edge is solitary can be obtained by successively splicing odd wheels up to multiple edges. We now consider the case in which exactly two odd wheels are involved. The following proposition shows that one of them must be $W_3$, i.e. $K_4$.
\begin{pro}\label{pro:2-wheel}
	Let $G\in W_s(\overline{x}) \odot W_t({x})$, where $\overline{x}\in V(W_s)$, ${x}\in V(W_t)$ and $s\geq t$.
	If $G$ is a brick in which every removable edge  is solitary, then 
	$s=3$ or $t=3$.
	Moreover, if $s\ge5$ and $t=3$, then  $\overline{x}$ is not the hub of $W_s$.	
\end{pro}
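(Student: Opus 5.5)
The plan is to argue by contradiction, using that a removable edge which is not solitary is forbidden, and to exhibit such an edge whenever $s,t\ge 5$ and whenever $t=3$ with $\overline{x}$ the hub of $W_s$. Write $C=\partial(X)$ for the cut of $G$ whose two $C$-contractions are $W_s$ and $W_t$. Since both contractions are bricks, $C$ is separating. Since $G$ is a brick, $C$ is nontrivial and hence not tight. So Lemma \ref{lem:re_also_re} and Corollary \ref{lem:MC} apply. The key mechanism is Lemma \ref{lem:re_also_re}: an edge of $W_s$ not in $C$ that is removable in $W_s$ is removable in $G$. For $s\ge5$ these are the spokes of $W_s$ not incident with $\overline{x}$. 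By hypothesis each such spoke $e$ must be solitary in $G$. I will then show that, in most configurations, $e$ lies in two perfect matchings of $G$.

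\textbf{Case $s,t\ge5$, and $\overline{x}$ or $x$ a rim vertex.} Say $\overline{x}=w_1$ is a rim vertex of $W_s$, with hub $h$. Take a spoke $e=hw_j$ with $w_j$ not adjacent to $w_1$. A perfect matching of $G$ containing $e$ consists of three parts: $e$; a perfect matching of $W_s-h-w_j$ after replacing $w_1$ by the cut edge used; and a perfect matching of $W_t$ containing the corresponding cut edge. Two choices produce two perfect matchings.
\begin{enumerate}[label=(\roman*)]
\item Vary the cut edge among the three edges $w_1h$ (absent, since $h$ is used), $w_1w_2$ and $w_1w_s$. At least two cut edges are available in the path $W_s-h-w_j$, giving different parities on the two rim paths.
\item Fix the $W_s$-side and vary the perfect matching of $W_t$ through the mapped cut edge $f$. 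If $f$ is not solitary in $W_t$, we already get two matchings. Odd wheels with $t\ge5$ have very few solitary edges: rim edges opposite, etc.
\end{enumerate}
More cleanly, I will use Corollary \ref{lem:MC}(2). Since $e$ is removable in $G$, the edge of $M\cap C$ is solitary in $W_t$ for every perfect matching $M$ of $W_s$ containing $e$. So every cut edge reachable from some spoke $e$ of $W_s$ must be solitary in $W_t$, and symmetrically with the roles of the wheels exchanged. I will check that in an odd wheel $W_k$ with $k\ge5$, each spoke lies in several perfect matchings. Each rim edge $w_iw_{i+1}$ lies in a perfect matching using a spoke, and in fact in more than one once $k\ge5$. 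The exception is a rim edge whose removal leaves a path of length $k-2$ with the hub, which is forced. The central count is a solitary-edge classification: in $W_k$ with $k\ge5$, no spoke is solitary, and rim edges are solitary only in limited cases. Combining this with the fact that the cut edges reachable from removable spokes of the other wheel cover at least two spokes or a non-solitary edge at $x$ gives the contradiction. This classification, together with tracking which cut edges are reachable, is where the real work lies. I would handle it by listing the perfect matchings of $W_k$ explicitly: the hub is matched to some $w_j$, and the rest is a unique matching of the even path.

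\textbf{Case $\overline{x}$ is the hub of $W_s$ ($s\ge5$).} Here all of $\partial(\overline{x})$ consists of spokes, so no spoke of $W_s$ survives outside $C$, and I use the other side instead. If $t\ge5$, the above argument with the roles swapped applies. If $t=3$, then $x$ has degree $3$, so $s=3$, which contradicts $s\ge5$; this degree matching $d(\overline{x})=d(x)$ kills the hub case immediately. The same degree remark handles the mixed case: if exactly one of the two vertices is a hub, its degree is $s$ or $t\ge5$, while a rim vertex has degree $3$. So $\overline{x}$ and $x$ are both rim vertices or both hubs with $s=t$. For two hubs with $s=t\ge5$, $G$ is the graph with two rims joined by a matching. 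Here I exhibit a rim edge made removable in $G$ that is not solitary, by direct inspection, using Proposition \ref{pro:Me-nonre} in reverse to check removability.

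The main obstacle is the both-rims case with $s,t\ge5$. There I expect to pick the spoke $hw_j$ with $j$ chosen so that both parities occur across $C$. This yields two distinct matchings containing $hw_j$, after verifying that each relevant cut edge extends in $W_t$, which it does because $W_t$ is matching covered.
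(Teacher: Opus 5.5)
There is a genuine gap: you have the solitary edges of odd wheels backwards. Every perfect matching of $W_k$ contains exactly one spoke $hw_j$, and $W_k-h-w_j$ is a path on $k-1$ vertices with a unique perfect matching. So $W_k$ has exactly $k$ perfect matchings, every spoke is solitary, and each rim edge lies in $(k-1)/2\ge 2$ of them. For $k\ge5$ the solitary edges are therefore \emph{exactly} the spokes; the paper uses precisely this (``$e$ is solitary in $W_t$, that is, $e\in\partial_{W_t}(h_2)$'').

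This breaks your step (i) and your plan for the ``main obstacle''. With $\overline{x}=w_1$, a perfect matching of $G$ through $hw_j$ is forced to use the cut edge $w_1w_2$ if $j$ is odd and $w_1w_s$ if $j$ is even, by the parity of the two rim segments. So no choice of $j$ makes both parities occur. A second matching can only come from the $W_t$ side, i.e.\ from Corollary~\ref{lem:MC}(2). With the correct classification, that settles the rim--rim case directly. The spokes $hw_3$ and $hw_2$ lie outside $C$ and are removable in $G$ by Lemma~\ref{lem:re_also_re}. Hence $w_1w_2$ and $w_1w_s$ must both be solitary in $W_t$, i.e.\ spokes of $W_t$. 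But they are two distinct edges at the rim vertex $x$, which meets only one spoke. Your closing combination instead rests on the false premise that spokes are non-solitary.

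The hub--hub case ($s=t\ge5$) is not handled either. You have no tool for certifying a ``removable non-solitary rim edge'': Proposition~\ref{pro:Me-nonre} only certifies non-removability. Moreover, such an edge need not exist. Take two copies of $W_5$ with hubs $h,h'$ and rims $w_1\cdots w_5$, $u_1\cdots u_5$, spliced at the hubs with $hw_i\mapsto h'u_i$; this is the pentagonal prism. In $G-w_1w_2$ there is no perfect matching containing $u_1u_2$: it forces $w_1w_5$, $w_2w_3$, $w_4u_4$ and strands $u_3,u_5$. So rim edges there are nonremovable.

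The right edges are the cut edges. They are spokes on both sides, hence removable in $G$. Since $C$ is not tight, some perfect matching $M$ has $|M\cap C|\ge3$, contradicting Corollary~\ref{lem:MC}(1). The paper runs this uniformly. Using Corollary~\ref{lem:MC}(2) and the classification, it shows every edge of such an $M\cap C$ is a spoke of $W_s$ or of $W_t$. A parity count then produces an edge of $M\cap C$ that is a spoke of both wheels, contradicting Corollary~\ref{lem:MC}(1), with no case split on hub versus rim.

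Finally, your degree argument makes the ``Moreover'' part vacuous by taking the wheels to be simple. It has content only when $W_3$ carries parallel edges at $x$, as the paper's proof assumes. In that setting you need the following argument. A perfect matching with $|M\cap C|=3$ uses one edge from $x$ to each of its three neighbours. Since $d(x)=s\ge5$, one of these is a parallel edge, hence removable in $W_3$. It is also a spoke of $W_s$, so Lemma~\ref{lem:re_also_re} and Corollary~\ref{lem:MC}(1) give the contradiction.
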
	
\begin{proof} Let $C:=\partial_G(V(W_s)\setminus \{\overline{x}\})$.
	Suppose, to the contrary, $s\ge 5$ and $t\ge 5$.
	Let $h_1$ and $h_2$ be the hubs of $W_s$ and $W_t$, respectively.
    Since $C$ is not a tight cut of $G$, there exists a perfect matching $M$ of $G$ such that $|M\cap C|\ge3$.
    We first claim that $M\cap C\subseteq \partial_{W_s}(h_1)\cup \partial_{W_t}(h_2)$.
    Suppose, to the contrary, that there is an edge $e\in M\cap C$ with $e\notin \partial_{W_s}(h_1)\cup \partial_{W_t}(h_2)$.
    Then \( W_s \) has a perfect matching \( M_e' \) containing \( e \) such that there is an edge \( e' \in M_e' \setminus C \) that is removable in \( W_s \). 
    By Lemma \ref{lem:re_also_re}, $e'$ is removable in $G$.
    Hence, by Corollary \ref{lem:MC} (2), $e$ is solitary in $W_t$, that is, $e\in \partial_{W_t}(h_2)$, a contradiction. 
    So $M\cap C\subseteq \partial_{W_s}(h_1)\cup \partial_{W_t}(h_2)$. 
    For the edges in $M\cap C$, 
    $|M\cap C\cap \partial_{W_s}(h_1)|$, $|M\cap C\cap \partial_{W_t}(h_2)|$  
    and $|M\cap C|$ are all odd integers. Since $|M\cap C|=|M\cap C\cap \partial_{W_s}(h_1)|+|M\cap C\cap \partial_{W_t}(h_2)|-|M\cap C\cap \partial_{W_s}(h_1)\cap \partial_{W_t}(h_2)|$, $|M\cap C\cap \partial_{W_s}(h_1)\cap \partial_{W_t}(h_2)|$ is odd, and thus $M\cap C$ contains an edge $f\in \partial_{W_s}(h_1)\cap \partial_{W_t}(h_2)$.
    For $i=1,2$, every edge of $\partial(h_i)$ is a removable edge of the corresponding odd wheel. By Lemma \ref{lem:re_also_re}, $f$ is removable in $G$, which contradicts Corollary \ref{lem:MC} (1).
    Therefore, $s=3$ or $t=3$.
	Next, we consider the case that  $s\ge5$ and $t=3$.
	Suppose that $h_1=\overline{x}$. Since $C$ is not a tight cut of $G$, there exists a perfect matching $M$ of $G$ such that $|M\cap C|=3$.
	As $d_{W_s}(\overline{x})\ge5$ and $d_{W_3}(x)=d_{W_s}(\overline{x})$,  $\partial_{W_3}(x)$ contains multiple edges.
	And then there exists an edge $f'\in M\cap C$ that is removable in $W_3$.
	 Since every edge of $\partial_{W_s}(\overline{x})$ is removable, $f'$ is removable in $G$ by Lemma \ref{lem:re_also_re}, contradicting  Corollary \ref{lem:MC} (1). Therefore, the result follows.	
\end{proof}	

\section{Bricks in Which the Set of Removable Edges Coincides with the Set of Solitary Edges}\label{sec:R=S} 
For a graph $G$, let $R_G$ and $S_G$ be its sets of removable edges and solitary edges of $G$, respectively. We omit the subscript $G$ if $G$ is understood.
In this section, we consider the structure of bricks satisfying the condition $R = S$. 
Note that every odd wheel $W_k$ ($k \ge 5$), possibly with multiple edges incident to its hub vertex, is a brick satisfying $R = S$. 
\begin{thm}\label{R=S}
	Let $G\in G_1\odot G_2$, where
	$G_i$ is a brick with $R_{G_i}=S_{G_i}$ for each $i\in\{1,2\}$. Then
	$G$ is a brick with $R=S$ if and only if $G$ is a brick with $R\subseteq S$.
\end{thm}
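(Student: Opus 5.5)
The forward direction is trivial, since $R=S$ implies $R\subseteq S$. For the converse, assume $G$ is a brick with $R\subseteq S$. We must show $S\subseteq R$, that is, every solitary edge $e$ of $G$ is removable in $G$. Let $C$ be the splicing cut, so that $G_1$ and $G_2$ are the two $C$-contractions of $G$. Because $G_1$ and $G_2$ are matching covered, $C$ is separating. Because $G$ is a brick and $C$ is nontrivial (each $G_i$ is a brick and so has at least four vertices), $C$ is not tight. The tool for the converse is Lemma~\ref{lem:re_also_re}: if $e$ is removable in both $G_1$ and $G_2$, with the convention that an edge absent from a graph counts as removable there, then $e$ is removable in $G$.

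\textbf{Edges off the cut.} First suppose $e\notin C$, say $e\in E(G_1)\setminus C$. By Proposition~\ref{pro:solitary}, $e$ is solitary in $G_1$. Since $R_{G_1}=S_{G_1}$, $e$ is removable in $G_1$. As $e\notin E(G_2)$, it is trivially removable in $G_2$, so Lemma~\ref{lem:re_also_re} shows $e$ is removable in $G$.

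\textbf{Edges in the cut.} Now suppose $e\in C$. By Proposition~\ref{pro:solitary}, $e$ is solitary in both $G_1$ and $G_2$. The hypothesis $R_{G_i}=S_{G_i}$ then makes $e$ removable in both $G_1$ and $G_2$, and Lemma~\ref{lem:re_also_re} again gives that $e$ is removable in $G$.

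The only point needing care is Proposition~\ref{pro:solitary}, which requires $C$ to be separating. That holds here because both contractions are bricks and hence matching covered; for $e\in C$ it is used in both contractions at once. I expect the argument to be essentially this short. The main check is whether the hypothesis $R\subseteq S$ is needed at all beyond guaranteeing that $G$ is a brick, since $S\subseteq R$ seems to follow from the structure of $G_1$ and $G_2$ alone. If the write-up required more, I would use $R\subseteq S$ only to confirm that no new removable, non-solitary edges arise, which is exactly what that hypothesis states.
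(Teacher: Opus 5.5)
Your proof is correct and is essentially the paper's argument run directly rather than by contradiction: the paper assumes a solitary nonremovable edge $e$, uses Lemma~\ref{lem:re_also_re} to find a contraction $G_i$ in which $e$ is nonremovable, and then gets a contradiction from $S_{G_i}\subseteq R_{G_i}$ and Proposition~\ref{pro:solitary}. Your suspicion is also right: neither argument uses the hypothesis $R\subseteq S$; both need only $S_{G_i}\subseteq R_{G_i}$ and the fact that the splicing cut is separating.
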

\begin{proof}
	We only   prove the sufficiency while the necessity
	is trivial.
	Suppose that there exists an edge $e$ that is nonremovable and solitary in $G$.
	Then $e$ is nonremovable in at least one of $G_1$ and $G_2$ by Lemma \ref{lem:re_also_re}.
	Without loss of generality, assume that  $e$ is nonremovable in $G_1$.
	As $G_1$ is a brick with $R_{G_1}=S_{G_1}$, $e$ is not solitary in $G_1$. So $e$ is not solitary in $G$ by Proposition \ref{pro:solitary}, a contradiction. 
	Thus, the result holds.
\end{proof}

We remark that the conclusion of Theorem \ref{R=S} remains valid even if exactly one of $G_1$ and $G_2$ fails to satisfy 
$R=S$.
This is formally established by the following proposition.

\begin{pro}\label{R=S-K4}
	Let $G_1$ be a brick with $R_{G_1}=S_{G_1}$ and $G_2$ be a $K_4$ {\rm(}possibly with multiple edges incident to the same vertex{\rm)} with $R_{G_2}\subseteq S_{G_2}$. Assume that $G\in G_1\odot G_2$. Then
	$G$ is a brick with $R=S$ if and only if $G$ is a brick with $R\subseteq S$.
\end{pro}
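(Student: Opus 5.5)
Necessity is trivial, since $R=S$ implies $R\subseteq S$. For sufficiency we follow the proof of Theorem~\ref{R=S}. Assume $G$ is a brick with $R\subseteq S$, and suppose some edge $e$ is solitary but nonremovable in $G$. Let $C$ be the splicing cut, so that $G_1$ and $G_2$ are the two $C$-contractions of $G$. Because $G$ and the $C$-contractions are matching covered, $C$ is separating by Proposition~\ref{pro:MC_IS_MC}. Since $G$ is a brick and $C$ is nontrivial, $C$ is not tight. By Lemma~\ref{lem:re_also_re}, $e$ is nonremovable in some $C$-contraction that contains it, say $G_j$ (recall the convention that edges not in $G_j$ count as removable in $G_j$).

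\emph{Case $j=1$.} Then $e\in R_{G_1}^c$, and since $R_{G_1}=S_{G_1}$, $e$ is not solitary in $G_1$. By Proposition~\ref{pro:solitary}, $e$ is not solitary in $G$, a contradiction. So we may assume $e$ is removable in $G_1$ or absent from $G_1$. In either case $e\in E(G_2)$ and $e$ is nonremovable in $G_2$.

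\emph{Case $j=2$.} This is where we cannot simply quote $R_{G_2}=S_{G_2}$. The underlying graph of $G_2$ is $K_4$, and every multiple edge is incident with a single vertex $w$. If $e$ is a multiple edge, then $e$ is not solitary in $G_2$, hence not solitary in $G$ by Proposition~\ref{pro:solitary}, and we are done. So $e$ is a simple edge of $G_2$ lying in a removable doubleton of $G_2$. Its complementary edge $f$ forms a perfect matching $\{e,f\}$ of $G_2$, and this is the unique perfect matching of $G_2$ containing $e$. We distinguish two subcases according to whether $e\in C$.

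\emph{Subcase $e\notin C$.} Let $x$ be the contraction vertex of $G_2$. Then $f$ is incident with $x$, so $f\in C$. Every perfect matching of $G$ containing $e$ restricts in $G_2$ to $\{e,f\}$. Hence it uses $f$, and its restriction to $G_1$ is a perfect matching of $G_1$ containing $f$. Since $e$ is solitary in $G$, $f$ must be solitary in $G_1$, so $f\in S_{G_1}=R_{G_1}$ and $f$ is removable in $G_1$. Now apply Lemma~\ref{thm:removable doubleton} to the doubleton $R=\{e,f\}$ with $R\cap C=\{f\}$ removable in $G_1$. It shows that $e$ is removable in $G$, contradicting the choice of $e$.

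\emph{Subcase $e\in C$.} Then $e$ is solitary in $G_1$, so $e\in R_{G_1}$. Also $f\notin C$, and $f$ is the only edge of $G_2-x$ that lies in a perfect matching with $e$. Applying Lemma~\ref{thm:removable doubleton} with $R\cap C=\{e\}$, which is removable in $G_1$, yields that $f$ is removable in $G$. Then $f\in R\subseteq S$, so $f$ is solitary in $G$. The perfect matchings of $G$ containing $e$ are in bijection with those of $G_1$ containing $e$, each extended by $f$. Thus the perfect matchings containing $e$ are exactly those containing $f$, and $e$ is solitary with unique matching $M_f\ni e$. Proposition~\ref{pro:Me-nonre} then only confirms that $e$ is nonremovable, so no contradiction arises yet. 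We therefore need to exploit the multiplicity structure of $G_2$ at $w$ together with $d(x)=d(\overline{x})\geq 3$.

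The main obstacle is this last subcase. The first thing to try is to show that $x$ must be the special vertex $w$, or adjacent to it, via multiplicities. Then use a multiple edge $g$ at $w$ that lies in $C$ and is parallel in $G_2$ to an edge of the doubleton. Such a $g$ is removable in $G_2$ and in $G_1$, hence removable in $G$ by Lemma~\ref{lem:re_also_re}. This would give two distinct removable edges whose unique perfect matchings interact, producing a second perfect matching through $e$ or a removable edge that is not solitary. If that fails, the fallback is to show this configuration forces $e\in R_{G_1}$ to be removable in $G$ by a direct ear/matching argument, which again contradicts the assumption.
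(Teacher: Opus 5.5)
Your handling of the case where $e$ is nonremovable in $G_1$ is correct, and so is the subcase $e\notin C$; both follow the paper's argument. (The paper splits on $f\in R_{G_1}$ versus $f\notin R_{G_1}$, while you first force $f\in S_{G_1}=R_{G_1}$; this amounts to the same thing.) The gap is the subcase $e\in C$, which you leave unresolved. You get stuck there because of a false assertion. It is not true that the perfect matchings of $G$ containing $e$ are exactly the perfect matchings of $G_1$ containing $e$, each extended by $f$. That description only covers perfect matchings meeting $C$ in exactly one edge. You note at the start that $C$ is not tight but never use it, and non-tightness is exactly what settles this subcase.

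Here is the missing step. Let $a$ be the end of $e$ in $V(G_2)\setminus\{x\}$. Since $e$ is nonremovable in $G_2$, it is not a multiple edge, so $e$ is the \emph{only} edge of $C$ incident with $a$. Because $C$ is not tight and $|V(G_2)\setminus\{x\}|=3$, some perfect matching $M$ of $G$ has $|M\cap C|\ge 3$, hence $|M\cap C|=3$. Then $M$ matches all three vertices of $V(G_2)\setminus\{x\}$ across $C$, so it must use $e$ at $a$. On the other hand, $e$ also lies in a perfect matching meeting $C$ only in $e$: take $M_1\cup\{f\}$ with $M_1$ a perfect matching of $G_1$ containing $e$, or use Proposition~\ref{pro:separating}. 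These two perfect matchings are distinct, so $e$ is not solitary in $G$, a contradiction. This is how the paper disposes of $e\in C$. None of the multiplicity or ear arguments you sketch is needed, and neither is the detour through the removable-doubleton lemma to make $f$ removable.

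A smaller point: your claim that a multiple edge of $G_2$ is not solitary in $G_2$ is false. If an edge of $G_2$ at $w$ is doubled, each copy lies only in the perfect matching it forms with the simple edge opposite to it. The correct reason $e$ is simple is that every multiple edge of $G_2$ is removable in $G_2$, whereas $e$ is not.
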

\begin{proof}
	We only prove the sufficiency while the necessity
	is trivial. Let $C:=\partial_G(V(G_1)\setminus \overline{x})$, where $\overline{x}$ is the splicing vertex of $G_1$.
	Suppose that there exists an edge $e$ that is nonremovable and solitary in $G$.
	By Lemma \ref{lem:re_also_re},
	$e$ is nonremovable in at least one of $G_1$ and $G_2$.
	If $e$ is nonremovable in $G_1$, the argument is similar to that in Theorem \ref{R=S}.
	Assume that $e$ is a nonremovable edge of $G_2$. Then $e$ is not a multiple edge of $G_2$. 
	If $e\in C$, then $e$ belongs to a perfect matching $M$ of $G$ with $|M\cap C|=3$, since $C$ is not a tight cut of $G$.
	Since $C$ is a separating cut of $G$, $e$ belongs to a perfect matching $M_e$ of $G$ with $|M_{e}\cap C|=1$ by Proposition \ref{pro:separating}.
	Hence $e$ belongs to two distinct perfect matchings of $G$, contradicting the hypothesis that $e$ is solitary in $G$.
	Thus, $e\notin C$.
	Since the underlying graph of $G_2$ is isomorphic to $K_4$, and $e$ is a solitary edge of $G_2$ by Proposition \ref{pro:solitary}, $e$ belongs to a removable doubleton of $G_2$, say $D:=\{e,f\}$, and $f$ is not a multiple edge of $G_2$. Note that $f\in C$.
	We now distinguish two cases.
	If $f\in R_{G_1}$, then $e\in R$ by Lemma \ref{thm:removable doubleton}, contradicting the hypothesis that $e$ is nonremovable in $G$.
	If $f\notin R_{G_1}$, then $f\notin S_{G_1}$. So $G_1$ contains two distinct perfect matchings $M_1$ and $M_2$ both containing $f$.
	Hence $M_1\cup \{e\}$ and $M_2\cup \{e\}$ are two distinct perfect matchings of $G$, contradicting the hypothesis that $e$ is solitary in $G$.
		Therefore, the result holds.	
	\end{proof}

We remark that the condition that both $G_1$ and $G_2$ satisfy $R = S$ is not necessary for Theorem \ref{R=S}, and the conclusion may fail if neither $G_1$ nor $G_2$ satisfies $R = S$. We illustrate this with two examples.
First, there exist bricks $G_1$ and $G_2$ with $R_{G_i} \subseteq S_{G_i}$ for each $i\in\{1,2\}$ such that $(G_1 \odot G_2)_{\theta}$  
is a brick with $R_G \subseteq S_G$ but $R_G \ne S_G$ (see Fig. \ref{fig:G1}).
Second, there exists a brick $H$ with $R_H = S_H$ that can be decomposed as  $(H_1 \odot H_2)_{\theta}$,
where where $R_{H_i} \neq S_{H_i}$ for each $i\in \{1, 2\}.$ (see Fig. \ref{fig:G}). In both figures, bold edges represent edges that are both removable and solitary, while red edges represent edges that are solitary but nonremovable.

\begin{figure}[!h]
	\centering
	\includegraphics[totalheight=3cm]{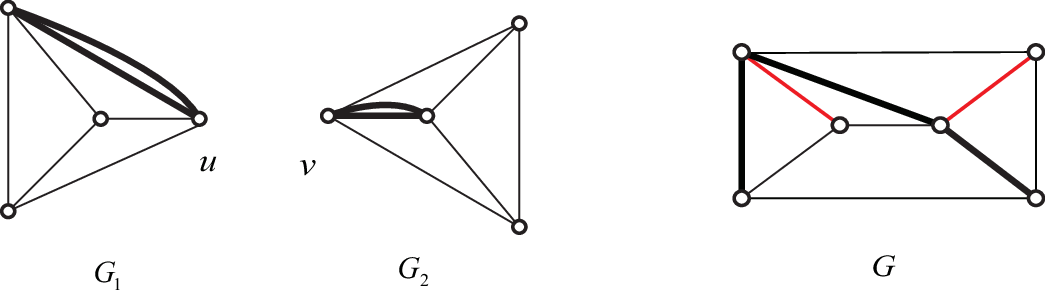}
	\caption{The graphs $G_1$, $G_2$ and $G=(G_1(u)\odot G_2(v))_{\theta}$.}
	\label{fig:G1}
\end{figure}

\begin{figure}[!h]
	\centering
	\includegraphics[totalheight=3cm]{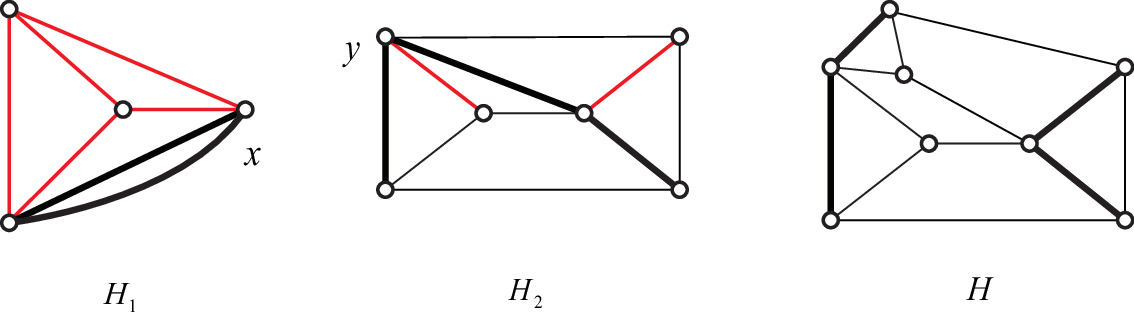}
	\caption{The graphs $H_1$, $H_2$ and $H=(H_1(x)\odot H_2(y))_{\theta}$.}
	\label{fig:G}
\end{figure}


\begin{thebibliography}{1}

    \bibitem{BM08} J. A. Bondy and U. S. R. Murty. Graph Theory. Springer, 2008.

\bibitem{CLM99} M. H. Carvalho, C. L. Lucchesi and U. S. R. Murty. Ear decompositions of matching covered graphs. Combinatorica, 19: 151-174, 1999.



    \bibitem{CLM02II} M. H. Carvalho, C. L. Lucchesi and U. S. R. Murty.
    On a conjecture of Lov\'{a}sz concerning bricks. \textrm{II}. Bricks of finite characteristic. J. Combin. Theory Ser. B, 85: 137-180, 2002.


    
    
\bibitem{Carvalho2004}
M. H. Carvalho, C. L. Lucchesi and U. S. R. Murty. Graphs with independent perfect matchings. J. Graph Theory, 48: 19-50, 2005.



    \bibitem{CLM12} M. H. Carvalho, C. L. Lucchesi and U. S. R. Murty. A generalization of Little's theorem on Pfaffian graphs. J. Combin. Theory Ser. B, 102: 1241-1266, 2012.
    

\bibitem{DL2026} X. Dai, F. Lu and Y. Zhang. Bricks in which every vertex is incident with a forcing edge. arXiv:2606.26594, 2026.

    \bibitem{ELP82} J. Edmonds, L. Lov\'asz and W. R. Pulleyblank. Brick decompositions and the matching rank of graphs. Combinatorica, 2(3): 247-274, 1982.




\bibitem{HLX25} X. He, F. Lu and J. Xue. Wheel-like bricks and minimal matching covered graphs. J. Graph Theory, 111(1): 5-16, 2026.

\bibitem{KCLL20} N. Kothari, M. H. Carvalho, C. L. Lucchesi and C. H. C. Little. On essentially 4-edge-connected cubic bricks. Electron. J. Combin., 27(1): \#P1.22, 2020.

    
    \bibitem{L1983}{\color{red} }L. Lov\'asz. Ear decompositions of matching covered graphs. Combinatorica, 2: 105-117, 1983.
    
     \bibitem{Lovasz87} L. Lov\'{a}sz. Matching structure and the matching lattice. J. Combin. Theory Ser. B, 43: 187-222, 1987.
 
\bibitem{LP86} L. Lov\'{a}sz and M. D. Plummer. Matching Theory. Number 29 in Annals of Discrete Mathematics. Elsevier Science, 1986. 


\bibitem{LuX24} F. Lu and J. Xue. Planar wheel-like bricks. arXiv:2410.20692, 2024.
    
    \bibitem{Lucchesi2024}
    C. L. Lucchesi and U. S. R. Murty. Perfect Matchings: A Theory of Matching Covered Graphs. Vol. 31. Springer, 2024.
    
    
    
 \bibitem{Tutte47} W. T. Tutte. The factorization of linear graphs. J. Lond. Math. Soc., 22:  107-111,  1947.
 
 \bibitem{LF23} X. Wu, F. Lu and L. Zhang. Removable edges in claw-free bricks. Graphs Combin., 40: 43, 2024. 




   

   
    



    
     \bibitem{Zhang24} Y. Zhang, F. Lu and H. Zhang. Cubic bricks that every $b$-invariant edge is forcing. arXiv:2411.17295, 2024.
     
  \bibitem{ZhangW25} Y. Zhang and X. Wang. Solid bricks that every $b$-invariant edge is solitary. arXiv:2507.21565, 2025.


     \bibitem{ZhangW25-1} Y. Zhang and X. Wang. Claw-free bricks that every $b$-invariant edge is solitary. arXiv:2509.23114, 2025.
     
     \bibitem{F2025} J. Zhou, X. Feng and W. Yan. Claw-free solid bricks. Graphs and Combinatorics, 41: 86, 2025.
     

   






%





   
    \end{thebibliography}
\end{document}